\newtheorem{definition}{Definition}[section]
\newtheorem{assumption}{Assumption}[section]
\newtheorem{lemma}{Lemma}[section]
\newtheorem{theorem}{Theorem}[section]
\newtheorem{proposition}{Proposition}[section]
\newtheorem{remark}{Remark}
\begin{document}
\title{Convergence Analysis of EXTRA in Non-convex Distributed Optimization}
\author{Lei Qin and Ye Pu
\thanks{This work was supported by a Melbourne Research Scholarship and the Australian Research Council (DE220101527).}
\thanks{L. Qin and Y. Pu are with the Department of Electrical and Electronic Engineering, University of Melbourne, Parkville VIC 3010, Australia \texttt{\small leqin@student.unimelb.edu.au, ye.pu@unimelb.edu.au}.}}
\maketitle
\thispagestyle{empty} % Removes the page number in the first page

\begin{abstract}
Optimization problems involving the minimization of a finite sum of smooth, possibly non-convex functions arise in numerous applications. To achieve a consensus solution over a network, distributed optimization algorithms, such as \textbf{EXTRA} (decentralized exact first-order algorithm), have been proposed to address these challenges. In this paper, we analyze the convergence properties of \textbf{EXTRA} in the context of smooth, non-convex optimization. By interpreting its updates as a nonlinear dynamical system, we show novel insights into its convergence properties. Specifically, i) \textbf{EXTRA} converges to a consensual first-order stationary point of the global objective with a sublinear rate; and ii) \textbf{EXTRA} almost surely avoids consensual strict saddle points under random initialization, providing robust second-order guarantees. These findings provide a deeper understanding of \textbf{EXTRA} in a non-convex context.
\end{abstract}

\begin{IEEEkeywords}
consensus-based distributed optimization; non-convex optimization; escaping saddle points
\end{IEEEkeywords}

\section{Introduction}
\label{sec: Introduction}
\IEEEPARstart{O}{ptimization} problems that involve a finite sum of functions formed as
\begin{gather}
    \label{eq: Unconstrained optimization problem}
    \min_{\mathbf{x} \in \mathbb{R}^{n}} f(\mathbf{x}), \quad \text{where } f(\mathbf{x}) \triangleq \sum_{i=1}^{m} f_{i}(\mathbf{x}),
\end{gather}
are central to a wide range of applications, including distributed machine learning \cite{boyd2011distributed,konevcny2015federated,mcmahan2017communication}, sensor networks\cite{rabbat2004distributed,johansson2007simple,wan2009event}, and large-scale control \cite{nedic2018distributed,molzahn2017survey,camponogara2010distributed}. In such problems, the function $f_{i}: \mathbb{R}^{n} \rightarrow \mathbb{R}$ is often assumed to be smooth and possibly non-convex. A key challenge arises when $f_{i}$ represents data or tasks that are distributed across $m$ agents, such that each agent $i$ has access only to its local function $f_{i}$ and its gradient information. The agents must collaboratively minimize the global objective $f(x)$ while communicating over an undirected and connected network graph $\mathcal{G}(\mathcal{V},\mathcal{E})$. This graph consists of two key components: the set of agents $\mathcal{V}:=\{1,\ldots,m\}$, where each agent $i \in \mathcal{V}$ processes its local data, and the set of edges $\mathcal{E} \subseteq \mathcal{V} \times \mathcal{V}$, indicating that agents $i$ and $j$ can exchange information if $(i,j)\in\mathcal{E}$.

To address such optimization problems, distributed algorithms have been developed to enable agents to iteratively reach a consensus on the optimal solution. Among these algorithms, consensus-based gradient descent methods have gained significant attention due to their scalability and adaptability to network settings. Notable methods include \textbf{D}istributed \textbf{G}radient \textbf{D}escent (\textbf{DGD}) \cite{nedic2009distributed,nedic2010constrained}, and its variants \cite{bianchi2012convergence,jakovetic2014fast,chen2012fast1,chen2012fast2,duchi2011dual,qin2023second}. However, these methods often face limitations in terms of achieving exact consensus. A key breakthrough in distributed optimization was the introduction of the \textbf{Ex}act Firs\textbf{t}-Orde\textbf{r} \textbf{A}lgorithm (\textbf{EXTRA}) in \cite{shi2015extra} and \textbf{DIGing} \cite{nedic2017achieving}, which both improve on traditional consensus-based methods by achieving exact consensus. As \textbf{EXTRA} in \cite{shi2015extra}, the update for each agent $i \in \mathcal{V}$ in iteration $k \in \mathbb{N}$ is given by
\begin{gather}
\label{eq: EXTRA}
    \begin{gathered}
        \hat{\mathbf{x}}^{1}_{i} = \sum_{j=1}^{m} \mathbf{W}_{ij}\hat{\mathbf{x}}^{0}_j - \alpha\nabla f_{i}(\hat{\mathbf{x}}^{0}_{i}),\\
        \hat{\mathbf{x}}^{k+2}_{i} = \hat{\mathbf{x}}^{k+1}_{i} + \sum_{j=1}^{m} \mathbf{W}_{ij}\hat{\mathbf{x}}^{k+1}_{j} - \sum_{j=1}^{m} \mathbf{V}_{ij}\hat{\mathbf{x}}^{k}_{j}\\
        - \alpha[\nabla f_{i}(\hat{\mathbf{x}}^{k+1}_{i}) - \nabla f_{i}(\hat{\mathbf{x}}^{k}_{i})],  
    \end{gathered}
\end{gather}
where $\hat{\mathbf{x}}_{i} \in \mathbb{R}^{n}$ is the local copy of the decision vector $\mathbf{x}$ at agent $i \in \mathcal{V}$, and $\hat{\mathbf{x}} = [\hat{\mathbf{x}}_{1}^{\top},\cdots, \hat{\mathbf{x}}_{m}^{\top}]^{\top} \in (\mathbb{R}^{n})^{m}$, $\alpha > 0$ is the constant step-size, $\nabla f_{i}$ is the gradient of $f_{i}$, and $\mathbf{W}_{ij}$ (resp., $\mathbf{V}_{ij}$) is the scalar entry in the $i$-th row and $j$-th column of a graph-dependent matrix $\mathbf{W} \in \mathbb{R}^{m\times m}$ (resp., $\mathbf{V} \in \mathbb{R}^{m\times m}$). Compared to gradient tracking methods in \cite{nedic2017achieving}, \textbf{EXTRA} is more free to choose graph-dependent matrices $\mathbf{W}$ and $\mathbf{V}$. 

Despite its effectiveness, the theoretical analysis of the \textbf{EXTRA} algorithm has largely focused on settings where the objective function $f(x)$ is smooth and either convex or strongly convex. The original work on \textbf{EXTRA} \cite{shi2015extra} established linear convergence rates for strongly convex objectives and sublinear rates for general convex functions, providing a solid foundation for distributed optimization under these assumptions. Subsequently, the primal-dual-based method \cite{lei2016primal}, which extends \textbf{EXTRA}, achieved similar convergence results in convex settings. Additionally, convergence results for \textbf{EXTRA} on directed graphs were established in \cite{xin2018linear}, while extensions such as \textbf{PG-EXTRA} \cite{shi2015proximal} have addressed non-smooth convex problems. However, in non-convex settings, theoretical results remain more limited. Recent studies, such as \cite{hong2018gradient,daneshmand2020second}, have started to explore the performance of primal-dual-based methods and gradient tracking methods in non-convex settings, focusing on second-order guarantees.

This paper provides a novel perspective on the \textbf{EXTRA} algorithm by analyzing its update as a nonlinear dynamical system. Building on these insights, we establish both first-order and second-order convergence guarantees for \textbf{EXTRA} in the context of smooth and non-convex optimization:
\begin{enumerate}
    \item In Theorem \ref{the: EXTRA convergence}, we model \textbf{EXTRA} as a nonlinear dynamic system. By assuming a linear relationship between $\mathbf{W}$ and $\mathbf{V}$, we establish that the dynamical system defined by \textbf{EXTRA} exhibits asymptotic convergence to the set of first-order stationary points, which can be interpreted as the set being asymptotically attractive from a control perspective. This result extends prior work to the non-convex setting.
    \item In Theorem \ref{the: EXTRA convergence 2}, we establish that under random initialization, \textbf{EXTRA} almost surely avoids consensual strict saddle points of the global objective and eventually converge to a consensual second-order stationary point.
\end{enumerate}

The above results can be found in Section \ref{sec: Main Results}. Section \ref{sec: Assumptions and Supporting Results} provides the assumptions and supporting results. Section \ref{sec: Proofs} provides complete proofs for the theoretical results. Section \ref{sec: Numerical Examples} provides a numerical example.

\subsection{Notation}
\label{sec: Notation}
Let $\mathbf{I}_n$ denote the $n\times n$ identity matrix, $\bm{1}_n$ denote the $n$-vector with all entries equal to $1$, and $\mathbf{A}_{ij}$ denote the entry in row $i$ and column $j$ of the matrix $\mathbf{A}$. For a square symmetric matrix $\mathbf{B}$, we use $\lambda_{\mathrm{min}}(\mathbf{B})$, $\lambda_{\mathrm{max}}(\mathbf{B})$, and $\|\mathbf{B}\|$ to denote its minimum eigenvalue, maximum eigenvalue, and spectral norm, respectively. For a square matrix $\mathbf{C}$, we use $\lambda_{i}(\mathbf{C}) \ge 0$ to denote its $i$-th largest eigenvalue in magnitude. The distance from the point $\mathbf{x} \in \mathbb{R}^{n}$ to a given set $\mathcal{Y} \subseteq \mathbb{R}^{n}$ is denoted by $\textup{dist}(\mathbf{x}, \mathcal{Y})\coloneqq \inf_{\mathbf{y} \in \mathcal{Y}}\|\mathbf{x} - \mathbf{y}\|$. The Kronecker product is denoted by $\otimes$. Unless explicitly stated otherwise, all iterated parameters in this paper are positive integers.

\section{Assumptions and Supporting Results}
\label{sec: Assumptions and Supporting Results}
\subsection{Definitions and Assumptions}
\begin{definition}
\label{def: first-order stationary point}
    For a differentiable function $h:\mathbb{R}^{n}\rightarrow\mathbb{R}$, a point $\mathbf{x}$ is said to be first-order stationary if $\|\nabla h(\mathbf{x})\| = 0$, where $\nabla h$ denotes the gradient of $h$.
\end{definition}

\begin{assumption}[Lipschitz continuity]
\label{ass: Lipschitz}
    Each $f_{i}$ in \eqref{eq: Unconstrained optimization problem} is $L_{f_{i}}$-gradient Lipschitz, i.e., for all $\mathbf{x},\mathbf{y}\in\mathbb{R}^{n}$ and each $i \in \mathcal{V}$, $\|\nabla f_{i}(\mathbf{x}) - \nabla f_{i}(\mathbf{y})\| \le L_{f_{i}} \|\mathbf{x}-\mathbf{y}\|$.
\end{assumption}

\begin{assumption}[Coercivity and properness]
\label{ass: Coercivity}
    Each $f_{i}$ in \eqref{eq: Unconstrained optimization problem} is coercive (i.e., its sublevel set is compact) and proper (i.e., not everywhere infinite).
\end{assumption}

% \begin{assumption}[Bounded gradient disagreement]
% \label{ass: Bounded gradient disagreement}
%     There exists constant $D > 0$ such that for each pair of agents $i$ and $j \in \mathcal{V}$, and any $\mathbf{x} \in \mathbb{R}^{n}$, $f_{i}$ and $f_{j}$ in \eqref{eq: Unconstrained optimization problem} satisfies $\|\nabla f_{i}(\mathbf{x}) - \nabla f_{j}(\mathbf{x})\| \le D$.
% \end{assumption}

\begin{assumption}[Connectivity]
\label{ass: Network}
    The undirected network graph $\mathcal{G}(\mathcal{V},\mathcal{E})$ is connected.
\end{assumption}

\begin{assumption}[Mixing matrix]
\label{ass: Mixing matrix}
    The mixing matrices of network $\mathcal{G}$, $\mathbf{W} \in \mathbb{R}^{m\times m}$ and $\mathbf{V} \in \mathbb{R}^{m\times m}$ satisfy:
    \begin{itemize}
        \item If $(i,j) \notin \mathcal{E} $ and $i \neq j$, then $\mathbf{W}_{ij} = 0$.
        \item $\mathbf{W} = \mathbf{W}^{\top}$.
        \item $-\mathbf{I}_{m} \prec \mathbf{W} \preceq \mathbf{I}_{m}$ and $\textup{null}\{\mathbf{I}_{m} - \mathbf{W} \} = \textup{span}\{\bm{1}\}$.
        \item $\mathbf{V} = \theta \mathbf{I}_{m} + (1-\theta) \mathbf{W} \succ \bm{0}$, with $\theta \in (0,\frac{1}{2}]$.
    \end{itemize}    
\end{assumption}
    
\begin{remark}
    The fourth condition in Assumption \ref{ass: Mixing matrix} can be extended to linear combination of $\mathbf{I}_m$ and high-order terms of $\mathbf{W}$, $i.e.$, $\mathbf{V} = \sum_{i=0}^{n} \theta_{i} \mathbf{W}^{i}$, where $\sum_{i=1}^{n} \theta_{i} = 1$, and our results still hold. For example, \textbf{DIGing} (or Gradient Tracking) algorithm in \cite{nedic2017achieving} uses $\mathbf{V} = \frac{1}{4} \mathbf{W}^{2} + \frac{1}{2} \mathbf{W} + \frac{1}{4} \mathbf{I}$. However, the high-order terms of $\mathbf{W}$ require multiple communication steps at each iteration of the distributed algorithm.
\end{remark}

\subsection{Algorithm Review and Supporting Results}
To study distributed algorithms, we first need to reformulate the objective function in \eqref{eq: Unconstrained optimization problem} as
\begin{gather}
    \begin{gathered}
        \label{eq: Constrained optimization problem}
        F:~(\mathbb{R}^{n})^{m} \rightarrow \mathbb{R},~F(\hat{\mathbf{x}}) \triangleq \sum_{i=1}^{m} f_{i}(\hat{\mathbf{x}}_{i}),
    \end{gathered}    
\end{gather}
where $\hat{\mathbf{x}} = [\hat{\mathbf{x}}_{1}^{\top},\cdots, \hat{\mathbf{x}}_{m}^{\top}]^{\top} \in (\mathbb{R}^{n})^{m}$ with each $\hat{\mathbf{x}}_{i} \in \mathbb{R}^{n}$. Note that, $\nabla F(\hat{\mathbf{x}}) = [\nabla f_{1}(\hat{\mathbf{x}}_{1})^{\top},\cdots, \nabla f_{m}(\hat{\mathbf{x}}_{m})^{\top}]^{\top}$ and $\nabla^{2} F(\hat{\mathbf{x}}) = \bigoplus_{i=1}^{m} \nabla^{2} f_{i}(\hat{\mathbf{x}}_{i})$ where $\bigoplus$ denotes the block diagonal concatenation of matrices. In particular, the Hessian of $F$ is block diagonal. Then, the following result can be derived directly.

\begin{proposition}
    Let Assumption \ref{ass: Lipschitz} hold. $F$ defined in \eqref{eq: Constrained optimization problem} has $L_{F}$-Lipschitz continuous gradient with $L_{F} = \max_{i} \{L_{f_{i}}\}$.    
\end{proposition}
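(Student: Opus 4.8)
The plan is to prove this directly from the block-diagonal structure of $\nabla F$. Recall that $\nabla F(\hat{\mathbf{x}}) = [\nabla f_{1}(\hat{\mathbf{x}}_{1})^{\top},\cdots,\nabla f_{m}(\hat{\mathbf{x}}_{m})^{\top}]^{\top}$, so for any $\hat{\mathbf{x}},\hat{\mathbf{y}}\in(\mathbb{R}^{n})^{m}$ the difference $\nabla F(\hat{\mathbf{x}}) - \nabla F(\hat{\mathbf{y}})$ stacks the blocks $\nabla f_{i}(\hat{\mathbf{x}}_{i}) - \nabla f_{i}(\hat{\mathbf{y}}_{i})$. First I would write $\|\nabla F(\hat{\mathbf{x}}) - \nabla F(\hat{\mathbf{y}})\|^{2} = \sum_{i=1}^{m}\|\nabla f_{i}(\hat{\mathbf{x}}_{i}) - \nabla f_{i}(\hat{\mathbf{y}}_{i})\|^{2}$, then bound each summand by $L_{f_{i}}^{2}\|\hat{\mathbf{x}}_{i}-\hat{\mathbf{y}}_{i}\|^{2}$ via Assumption \ref{ass: Lipschitz}, and finally bound $L_{f_{i}}^{2}\le L_{F}^{2}$ with $L_{F}=\max_{i}\{L_{f_{i}}\}$ to get $\sum_{i=1}^{m}\|\nabla f_{i}(\hat{\mathbf{x}}_{i}) - \nabla f_{i}(\hat{\mathbf{y}}_{i})\|^{2} \le L_{F}^{2}\sum_{i=1}^{m}\|\hat{\mathbf{x}}_{i}-\hat{\mathbf{y}}_{i}\|^{2} = L_{F}^{2}\|\hat{\mathbf{x}}-\hat{\mathbf{y}}\|^{2}$. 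Taking square roots gives the claim.

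An alternative route, if one prefers a Hessian-based argument under stronger smoothness, is to note that $\nabla^{2}F(\hat{\mathbf{x}}) = \bigoplus_{i=1}^{m}\nabla^{2}f_{i}(\hat{\mathbf{x}}_{i})$ is block diagonal, so its spectral norm equals $\max_{i}\|\nabla^{2}f_{i}(\hat{\mathbf{x}}_{i})\| \le \max_{i}L_{f_{i}} = L_{F}$, and then integrate along the segment between $\hat{\mathbf{x}}$ and $\hat{\mathbf{y}}$. However, since Assumption \ref{ass: Lipschitz} only posits gradient Lipschitzness (not twice differentiability), the first argument is cleaner and self-contained, so that is the one I would present.

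There is essentially no obstacle here: the only thing to be careful about is that the norm on $(\mathbb{R}^{n})^{m}$ is the Euclidean norm, so that the Pythagorean decomposition across blocks is valid, and that the constant $L_{F}=\max_{i}\{L_{f_{i}}\}$ is indeed attained (it is a finite maximum over $m$ agents). The proof is two or three lines.
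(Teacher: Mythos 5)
Your proof is correct and follows exactly the argument the paper intends: the paper states this proposition as a direct consequence of the block structure of $\nabla F$ without writing out a proof, and your blockwise Pythagorean decomposition $\|\nabla F(\hat{\mathbf{x}})-\nabla F(\hat{\mathbf{y}})\|^{2}=\sum_{i=1}^{m}\|\nabla f_{i}(\hat{\mathbf{x}}_{i})-\nabla f_{i}(\hat{\mathbf{y}}_{i})\|^{2}\le L_{F}^{2}\|\hat{\mathbf{x}}-\hat{\mathbf{y}}\|^{2}$ is precisely that derivation. Your choice of the gradient-based argument over the Hessian-based one is also the right call, since Assumption \ref{ass: Lipschitz} does not assume twice differentiability.
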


Next, we define the \textit{consensual first-order stationary} point for $F$ in \eqref{eq: Constrained optimization problem}. 
\begin{definition}
\label{def: consensual first-order stationary point}
    For function $F$ in \eqref{eq: Constrained optimization problem}, a point $\hat{\mathbf{x}} \in (\mathbb{R}^{n})^{m}$ is said to be a \textit{consensual first-order stationary} point if
    \begin{enumerate}
        \item $\hat{\mathbf{x}} = \frac{1}{m} (\bm{1}_{m}\bm{1}_{m}^{\top} \otimes \mathbf{I}_{n})\hat{\mathbf{x}}$;
        \item $(\bm{1}_{m} \otimes \mathbf{I}_{n})^{\top} \nabla F(\hat{\mathbf{x}}) = \sum_{i=1}^{m} \nabla f_{i}(\hat{\mathbf{x}}_{i}) = 0$.
    \end{enumerate}
\end{definition}

If condition 1) holds, then the point $\hat{\mathbf{x}}$ is in consensus, $i.e.$, $\hat{\mathbf{x}}_{i} = \hat{\mathbf{x}}_{j}$ for all $i$, $j \in \mathcal{V}$, where $\hat{\mathbf{x}}_{i} \in \mathbb{R}^{n}$ is the local copy of the decision vector $\mathbf{x}$ at agent $i \in \mathcal{V}$. Furthermore, if condition 2) also holds, then for any $i \in \mathcal{V}$, $\hat{\mathbf{x}}_{i}$ is a first-order stationary point of $f$ (see Proposition 2.1 in \cite{shi2015extra}).

Recall the standard fixed step-size \textbf{EXTRA} algorithm as in \eqref{eq: EXTRA}, and it can be written in an aggregate form: $ \hat{\mathbf{x}}^{1} = \hat{\mathbf{W}} \hat{\mathbf{x}}^{0} - \alpha \nabla F(\hat{\mathbf{x}}^{0})$, $\hat{\mathbf{x}}^{k+2} = \hat{\mathbf{x}}^{k+1} + \hat{\mathbf{W}} \hat{\mathbf{x}}^{k+1} - \hat{\mathbf{V}} \hat{\mathbf{x}}^{k}
        - \alpha [\nabla F(\hat{\mathbf{x}}^{k+1}) - \nabla F(\hat{\mathbf{x}}^{k})]$,
where $\hat{\mathbf{W}} = \mathbf{W} \otimes \mathbf{I}_n$ and $\hat{\mathbf{V}} = \mathbf{V} \otimes \mathbf{I}_n$. Let $\hat{\mathbf{z}}^{k} = \hat{\mathbf{x}}^{k+1} - \hat{\mathbf{W}} \hat{\mathbf{x}}^{k}$. Then, with $\hat{\mathbf{z}}^{0} = -\alpha \nabla F(\hat{\mathbf{x}}^{0})$, \textbf{EXTRA} can be formulated as a dynamical system:
\begin{gather}
\label{eq: EXTRA hybrid Jacobi-Gauss-Seidel}
    \begin{gathered}
        \hat{\mathbf{x}}^{k+1} = \hat{\mathbf{W}} \hat{\mathbf{x}}^{k}  + \hat{\mathbf{z}}^{k},\\
        \hat{\mathbf{z}}^{k+1} = (\hat{\mathbf{W}} - \hat{\mathbf{V}}) \hat{\mathbf{x}}^{k} + \hat{\mathbf{z}}^{k} - \alpha [\nabla F(\hat{\mathbf{x}}^{k+1}) - \nabla F(\hat{\mathbf{x}}^{k})].   
    \end{gathered}
\end{gather}

\section{Main Results}
\label{sec: Main Results}
We study the first-order and second-order convergence properties of \textbf{EXTRA} using the dynamical model in \eqref{eq: EXTRA hybrid Jacobi-Gauss-Seidel}.

\subsection{Properties of Mixing Matrices}
From the nonlinear system in \eqref{eq: EXTRA hybrid Jacobi-Gauss-Seidel}, we derive key mixing matrix properties used in proving Theorem~\ref{the: EXTRA convergence}.
\begin{lemma}
    \label{lem: Mixing matrix}
    Let Assumptions \ref{ass: Mixing matrix} hold. Let
    \begin{gather}
    \label{eq: P matrix}
        \mathbf{P} = 
        \begin{bmatrix}
            \mathbf{W} - \frac{1}{m} \bm{1}_{m}\bm{1}_{m}^{\top} & \mathbf{I}_{m}\\
            \mathbf{W}-\mathbf{V} & \mathbf{I}_{m} - \frac{1}{m} \bm{1}_{m}\bm{1}_{m}^{\top}
        \end{bmatrix},
    \end{gather}
    then $\lambda_{1}(\mathbf{P}) < 1$ holds.
\end{lemma}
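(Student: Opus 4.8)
The plan is to simultaneously diagonalize the four blocks of $\mathbf{P}$ and thereby reduce the claim to a finite family of scalar $2\times 2$ problems. By Assumption~\ref{ass: Mixing matrix}, $\mathbf{W}$ is symmetric with $\mathrm{null}\{\mathbf{I}_m-\mathbf{W}\}=\mathrm{span}\{\bm{1}_m\}$ and $-\mathbf{I}_m\prec\mathbf{W}\preceq\mathbf{I}_m$, so it admits an orthonormal eigenbasis $\{\mathbf{u}_i\}_{i=1}^m$ with $\mathbf{W}\mathbf{u}_i=\mu_i\mathbf{u}_i$, where $\mathbf{u}_1=\bm{1}_m/\sqrt{m}$, $\mu_1=1$, and $\mu_i\in(-1,1)$ for $i\ge2$. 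Setting $Q=[\mathbf{u}_1,\dots,\mathbf{u}_m]$, one has $Q^\top\mathbf{W} Q=\mathrm{diag}(\mu_1,\dots,\mu_m)$ and $Q^\top(\tfrac1m\bm{1}_m\bm{1}_m^\top)Q=\mathrm{diag}(1,0,\dots,0)$, and since $\mathbf{W}-\mathbf{V}=\theta(\mathbf{W}-\mathbf{I}_m)$ the remaining two blocks are likewise diagonalized by $Q$. Conjugating $\mathbf{P}$ by $\mathrm{diag}(Q,Q)$ and then permuting coordinates to pair the $i$-th mode of the first block with the $i$-th mode of the second block shows that $\mathbf{P}$ is similar to a block-diagonal matrix whose $2\times2$ diagonal blocks are $\mathbf{P}_1=\bigl[\begin{smallmatrix}0&1\\0&0\end{smallmatrix}\bigr]$ and, for $i=2,\dots,m$, $\mathbf{P}_i=\bigl[\begin{smallmatrix}\mu_i&1\\\theta(\mu_i-1)&1\end{smallmatrix}\bigr]$. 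Hence the spectrum of $\mathbf{P}$ equals $\{0\}\cup\bigcup_{i\ge2}\mathrm{spec}(\mathbf{P}_i)$, and it remains to show every eigenvalue of each $\mathbf{P}_i$, $i\ge2$, has magnitude strictly below $1$.

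For $i\ge2$ the characteristic polynomial of $\mathbf{P}_i$ is $p_i(\lambda)=\lambda^2-(1+\mu_i)\lambda+\bigl(\mu_i(1-\theta)+\theta\bigr)$. Next I would apply the Schur–Cohn (Jury) stability test for a real monic quadratic, which states that both roots lie in the open unit disk if and only if $p_i(1)>0$, $p_i(-1)>0$, and $p_i(0)<1$. A direct computation gives $p_i(1)=\theta(1-\mu_i)$, $p_i(-1)=2(1+\mu_i)+\theta(1-\mu_i)$, and $p_i(0)=\mu_i(1-\theta)+\theta$. Using $\mu_i\in(-1,1)$ and $\theta\in(0,\tfrac12]$: $p_i(1)>0$ since $\theta>0$ and $\mu_i<1$; $p_i(-1)>0$ since $1+\mu_i>0$ and $1-\mu_i>0$; and $p_i(0)<1$ since $\mu_i(1-\theta)<1-\theta$. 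Therefore all eigenvalues of $\mathbf{P}_i$ lie strictly inside the unit disk, and together with the nilpotent block $\mathbf{P}_1$ this yields $\lambda_1(\mathbf{P})<1$.

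The one step requiring genuine care is the reduction in the first paragraph: because $\mathbf{P}$ is nonsymmetric, eigenvalue interlacing is unavailable, so the argument hinges on the four blocks being simultaneously diagonalized by $Q$ (they are all built from $\mathbf{W}$ and its spectral projection onto the eigenvalue $1$) together with careful bookkeeping of the similarity and the coordinate permutation, so that $\mathrm{spec}(\mathbf{P})$ is exactly the union of the $2\times2$ spectra. The remaining stability check is elementary; I would use the Jury conditions rather than the quadratic formula because, depending on the sign of $(1-\mu_i)\bigl((1-\mu_i)-4\theta\bigr)$, the eigenvalues of $\mathbf{P}_i$ may be real or a complex-conjugate pair, so writing the roots out explicitly would be clumsier than necessary.
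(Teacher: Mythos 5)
Your proof is correct, and it reaches the same scalar quadratic $\lambda^{2}-(1+\mu_i)\lambda+\mu_i(1-\theta)+\theta$ as the paper, but by a genuinely different and, in one respect, tighter route. The paper writes $\mathbf{P}=\mathbf{M}-\mathbf{N}$, locates the spectrum of $\mathbf{M}$ via a Schur complement and an explicit real/complex case analysis of the quadratic formula, and then argues that subtracting the projection $\mathbf{N}$ ``eliminates the eigenvalue at $1$'' of $\mathbf{M}$ by inspecting the associated eigenvector. Your simultaneous diagonalization of all four blocks by $\mathrm{diag}(Q,Q)$ (legitimate because every block is a polynomial in $\mathbf{W}$ or the spectral projection onto its unit eigenspace, using $\mathbf{W}-\mathbf{V}=\theta(\mathbf{W}-\mathbf{I}_m)$) followed by the interleaving permutation reduces $\mathbf{P}$ itself to $2\times 2$ blocks, which buys two things. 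First, it makes the consensus mode completely transparent: the $\mu_1=1$ block of $\mathbf{P}$ is the nilpotent $\bigl[\begin{smallmatrix}0&1\\0&0\end{smallmatrix}\bigr]$, whereas in the paper's decomposition the corresponding block of $\mathbf{M}$ is the Jordan block $\bigl[\begin{smallmatrix}1&1\\0&1\end{smallmatrix}\bigr]$, so the eigenvalue $1$ of $\mathbf{M}$ has algebraic multiplicity two rather than being simple as the paper asserts; your route sidesteps the resulting imprecision in the ``subtract $\mathbf{N}$ to remove the unit eigenvalue'' step, which for a non-normal $\mathbf{M}$ would otherwise need an invariant-subspace (not merely eigenvector) argument. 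Second, the Jury/Schur--Cohn test replaces the paper's three-case discussion of the discriminant with one uniform check; your stated form of the test ($p(1)>0$, $p(-1)>0$, $p(0)<1$) is indeed equivalent to the standard one here, since for complex-conjugate roots the constant term is automatically nonnegative and for real roots $p(\pm 1)>0$ with product of roots below $1$ forces both roots into $(-1,1)$. The only cosmetic caveat is that the textbook Jury condition is $\abs{p(0)}<1$; it would be worth one line noting $p_i(0)=\mu_i(1-\theta)+\theta>2\theta-1>-1$, or simply invoking the equivalence you implicitly used.
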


\subsection{Convergence guarantees of \textbf{EXTRA}}
The following main results establish the first-order and second-order guarantees for \textbf{EXTRA}. Theorem \ref{the: EXTRA convergence} shows convergence to a consensual first-order stationary point of $F$. 

\begin{theorem}
\label{the: EXTRA convergence}
    Let Assumptions \ref{ass: Lipschitz}-\ref{ass: Mixing matrix} holds. For any fixed step-size
    \begin{gather}
    \label{eq: alpha range 1}
        0 < \alpha < \frac{1-(\lambda_{1}(\mathbf{P}))^{2}}{6(L_{F})^{4} + 6(L_{F})^{3} + 48 L_{F} + 1},
    \end{gather}
    where $\mathbf{P}$ is defined in \eqref{eq: P matrix}, it holds that the sequence $\{\hat{\mathbf{x}}^{k}\}$ generated by \eqref{eq: EXTRA} satisfies that
    \begin{gather*}
        \lim_{k\rightarrow\infty} \norm{(\mathbf{I}_{mn} - \frac{1}{m} (\bm{1}_{m}\bm{1}_{m}^{\top}) \otimes \mathbf{I}_{n}) \hat{\mathbf{x}}^{k}} = 0,\\
        \lim_{k\rightarrow\infty} \norm{\frac{1}{m} (\bm{1}_{m}\bm{1}_{m}^{\top}) \otimes \mathbf{I}_{n} \cdot \nabla F(\hat{\mathbf{x}}^{k})} = 0.
    \end{gather*}
\end{theorem}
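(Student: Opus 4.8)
The plan is to split the state of the dynamical system \eqref{eq: EXTRA hybrid Jacobi-Gauss-Seidel} into a \emph{consensus} component and a \emph{disagreement} component, to show that the disagreement component obeys a stable linear recursion driven by a vanishing input whose system matrix is exactly the $\mathbf{P}$ of Lemma~\ref{lem: Mixing matrix}, and to show that the consensus component performs an inexact gradient step on $f$; a descent argument for $f$ then closes the loop. Concretely, let $\hat{\mathbf{J}}=\tfrac1m(\bm{1}_m\bm{1}_m^{\top})\otimes\mathbf{I}_n$, $\bar{\mathbf{x}}^{k}=\hat{\mathbf{J}}\hat{\mathbf{x}}^{k}$, $\hat{\mathbf{x}}_\perp^{k}=(\mathbf{I}_{mn}-\hat{\mathbf{J}})\hat{\mathbf{x}}^{k}$, and likewise for $\hat{\mathbf{z}}^{k}$. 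By Assumption~\ref{ass: Mixing matrix}, $\mathbf{W}\bm{1}_m=\mathbf{V}\bm{1}_m=\bm{1}_m$, so $\hat{\mathbf{J}}\hat{\mathbf{W}}=\hat{\mathbf{J}}\hat{\mathbf{V}}=\hat{\mathbf{J}}$ and $\hat{\mathbf{J}}(\hat{\mathbf{W}}-\hat{\mathbf{V}})=\bm{0}$. Applying $\hat{\mathbf{J}}$ to \eqref{eq: EXTRA hybrid Jacobi-Gauss-Seidel} with $\hat{\mathbf{z}}^{0}=-\alpha\nabla F(\hat{\mathbf{x}}^{0})$ gives, by a one-line induction, $\hat{\mathbf{J}}\hat{\mathbf{z}}^{k}=-\alpha\hat{\mathbf{J}}\nabla F(\hat{\mathbf{x}}^{k})$ for all $k$, hence $\bar{\mathbf{x}}^{k+1}=\bar{\mathbf{x}}^{k}-\alpha\hat{\mathbf{J}}\nabla F(\hat{\mathbf{x}}^{k})$; in terms of the common block $\bar{x}^{k}\in\mathbb{R}^{n}$ this reads $\bar{x}^{k+1}=\bar{x}^{k}-\tfrac\alpha m\nabla f(\bar{x}^{k})+\tfrac\alpha m\delta^{k}$ with $\|\delta^{k}\|\le\sqrt{m}\,L_{F}\|\hat{\mathbf{x}}_\perp^{k}\|$ by Assumption~\ref{ass: Lipschitz}. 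Applying $(\mathbf{I}_{mn}-\hat{\mathbf{J}})$ instead, the same identities yield, for $\mathbf{u}^{k}\coloneqq[(\hat{\mathbf{x}}_\perp^{k})^{\top},(\hat{\mathbf{z}}_\perp^{k})^{\top}]^{\top}$,
\begin{equation*}
\mathbf{u}^{k+1}=(\mathbf{P}\otimes\mathbf{I}_{n})\,\mathbf{u}^{k}+\mathbf{b}^{k},\qquad \|\mathbf{b}^{k}\|\le\alpha L_{F}\,\|\hat{\mathbf{x}}^{k+1}-\hat{\mathbf{x}}^{k}\|,
\end{equation*}
where $\mathbf{b}^{k}=[\bm{0}^{\top},\,-\alpha((\mathbf{I}_{mn}-\hat{\mathbf{J}})(\nabla F(\hat{\mathbf{x}}^{k+1})-\nabla F(\hat{\mathbf{x}}^{k})))^{\top}]^{\top}$ and $\mathbf{P}$ is precisely \eqref{eq: P matrix}; moreover $\mathbf{u}^{k}$ stays in the subspace on which $\mathbf{P}\otimes\mathbf{I}_{n}$ has spectral radius $\lambda_{1}(\mathbf{P})$.

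Next I would extract geometric decay of the disagreement. Since $\lambda_{1}(\mathbf{P})<1$ by Lemma~\ref{lem: Mixing matrix}, fix $\rho\in(\lambda_{1}(\mathbf{P}),1)$ and a constant $C$ with $\|(\mathbf{P}\otimes\mathbf{I}_{n})^{j}\mathbf{u}\|\le C\rho^{j}\|\mathbf{u}\|$ on that subspace, unroll the recursion, and apply a discrete convolution inequality to get $\sum_{k}\|\mathbf{u}^{k}\|^{2}\le C'\|\mathbf{u}^{0}\|^{2}+\tfrac{C''}{(1-\rho)^{2}}\sum_{k}\|\mathbf{b}^{k}\|^{2}$. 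To bound $\|\mathbf{b}^{k}\|$ I would use $\hat{\mathbf{x}}^{k+1}-\hat{\mathbf{x}}^{k}=(\hat{\mathbf{W}}-\mathbf{I}_{mn})\hat{\mathbf{x}}_\perp^{k}+\hat{\mathbf{z}}_\perp^{k}+\hat{\mathbf{J}}\hat{\mathbf{z}}^{k}$ together with $\|\hat{\mathbf{J}}\hat{\mathbf{z}}^{k}\|=\alpha\|\hat{\mathbf{J}}\nabla F(\hat{\mathbf{x}}^{k})\|\le\tfrac{\alpha}{\sqrt m}\|\nabla f(\bar{x}^{k})\|+\alpha L_{F}\|\hat{\mathbf{x}}_\perp^{k}\|$, which yields $\|\mathbf{b}^{k}\|^{2}\le \alpha^{2}c_{1}\|\mathbf{u}^{k}\|^{2}+\alpha^{4}c_{2}\|\nabla f(\bar{x}^{k})\|^{2}$ with $c_{1},c_{2}$ polynomial in $L_{F}$ — this is the origin of the $\alpha^{2}$/higher powers and the $L_{F}$-polynomial denominator in \eqref{eq: alpha range 1}.

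Then I would run the descent argument on $f$. The function $f$ is bounded below because each $f_{i}$, hence $f$, is coercive (Assumption~\ref{ass: Coercivity}), and $\nabla f$ is Lipschitz; the descent lemma applied to $\bar{x}^{k+1}=\bar{x}^{k}-\tfrac\alpha m\nabla f(\bar{x}^{k})+\tfrac\alpha m\delta^{k}$ with Young's inequality gives, for $\alpha$ small,
\begin{equation*}
f(\bar{x}^{k+1})\le f(\bar{x}^{k})-\tfrac{c_{3}\alpha}{m}\|\nabla f(\bar{x}^{k})\|^{2}+c_{4}\alpha\|\hat{\mathbf{x}}_\perp^{k}\|^{2}.
\end{equation*}
Summing this inequality and the bound from the previous step (working with partial sums) and writing $A=\sum_{k}\|\mathbf{u}^{k}\|^{2}$, $B=\sum_{k}\|\nabla f(\bar{x}^{k})\|^{2}$, one gets $A\le\mathrm{const}+O(\alpha^{2})A+O(\alpha^{4})B$ and $B\le\mathrm{const}+O(\alpha)A$; for $\alpha$ in the range \eqref{eq: alpha range 1} the self-coupling coefficients are strictly below $1$, so $A<\infty$ and $B<\infty$. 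In particular $\|\hat{\mathbf{x}}_\perp^{k}\|=\|(\mathbf{I}_{mn}-\hat{\mathbf{J}})\hat{\mathbf{x}}^{k}\|\to0$, which is the first claim, and $\|\hat{\mathbf{J}}\nabla F(\hat{\mathbf{x}}^{k})\|\le\tfrac1{\sqrt m}\|\nabla f(\bar{x}^{k})\|+L_{F}\|\hat{\mathbf{x}}_\perp^{k}\|\to0$, which is the second.

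The main obstacle is the two-way coupling: the input $\mathbf{b}^{k}$ that drives the disagreement recursion contains the consensus step $\alpha\hat{\mathbf{J}}\nabla F(\hat{\mathbf{x}}^{k})$, while the descent inequality for $f$ contains the disagreement $\|\hat{\mathbf{x}}_\perp^{k}\|$; decoupling these quantitatively is exactly what forces the step-size restriction \eqref{eq: alpha range 1}. A secondary technical point is that $\mathbf{P}$ is not symmetric, so $\|\mathbf{P}\|$ cannot be used directly; one must instead pass through the geometric bound $\|(\mathbf{P}\otimes\mathbf{I}_{n})^{j}\|\le C\rho^{j}$ (or an adapted norm), and it is the splitting $\|a+b\|^{2}\le(1+\eta)\|a\|^{2}+(1+\eta^{-1})\|b\|^{2}$ with $\eta\sim 1-\lambda_{1}(\mathbf{P})^{2}$ that produces the numerator $1-(\lambda_{1}(\mathbf{P}))^{2}$ appearing in \eqref{eq: alpha range 1}.
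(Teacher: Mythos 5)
Your proposal is correct and follows essentially the same route as the paper's proof: the same consensus/disagreement splitting, the same disagreement recursion driven by $\mathbf{P}\otimes\mathbf{I}_{n}$ with a gradient-difference input, an inexact descent step on the averaged objective (the paper's $G(\hat{\mathbf{x}})=F(\hat{\mathbf{J}}\hat{\mathbf{x}})$, which is your $f(\bar{x}^{k})$ up to lifting), and the same pair of coupled summed inequalities closed by the step-size restriction \eqref{eq: alpha range 1}. The one point where you are more careful than the paper is the stability of the non-symmetric $\mathbf{P}$: the paper contracts in one step via $\|\hat{\mathbf{P}}\tilde{\mathbf{\xi}}^{k}\|\le\lambda_{1}(\mathbf{P})\|\tilde{\mathbf{\xi}}^{k}\|$, which strictly speaking needs an adapted norm since the spectral radius of a non-normal matrix does not bound its operator norm, whereas your unrolled bound $\|(\mathbf{P}\otimes\mathbf{I}_{n})^{j}\|\le C\rho^{j}$ with $\rho\in(\lambda_{1}(\mathbf{P}),1)$ handles this correctly.
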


\begin{remark}
    Furthermore, the final step in the proof of Theorem~\ref{the: EXTRA convergence} also shows the following convergence rates:
    \begin{gather*}
        \frac{1}{k} \sum_{\kappa=0}^{k-1}\norm{(\mathbf{I}_{mn} - \frac{1}{m} (\bm{1}_{m}\bm{1}_{m}^{\top}) \otimes \mathbf{I}_{n}) \hat{\mathbf{x}}^{\kappa}}^{2} = \mathcal{O}(\frac{1}{k}),\\
        \frac{1}{k} \sum_{\kappa=0}^{k-1}\norm{\frac{1}{m} (\bm{1}_{m}\bm{1}_{m}^{\top}) \otimes \mathbf{I}_{n} \cdot \nabla F(\hat{\mathbf{x}}^{\kappa})}^{2} = \mathcal{O}(\frac{1}{k}).
    \end{gather*}
\end{remark}

We define the \textit{consensual second-order stationary} point for $F$ in \eqref{eq: Constrained optimization problem}, followed by our main result on the second-order properties of \textbf{EXTRA}.

\begin{definition}
\label{def: consensual second-order stationary point}
    For function $F$ in \eqref{eq: Constrained optimization problem}, a point $\hat{\mathbf{x}} \in (\mathbb{R}^{n})^{m}$ is said to be a \textit{consensual second-order stationary} point if it satisfies the following:
    \begin{enumerate}
        \item $\hat{\mathbf{x}} = \frac{1}{m} (\bm{1}_{m}\bm{1}_{m}^{\top} \otimes \mathbf{I}_{n})\hat{\mathbf{x}}$;
        \item $(\bm{1}_{m} \otimes \mathbf{I}_{n})^{\top} \nabla F(\hat{\mathbf{x}}) = \sum_{i=1}^{m} \nabla f_{i}(\hat{\mathbf{x}}_{i}) = 0$;
        \item $(\bm{1}_{m} \otimes \mathbf{I}_{n})^{\top} \nabla^{2} F(\hat{\mathbf{x}}) (\bm{1}_{m} \otimes \mathbf{I}_{n}) = \sum_{i=1}^{m} \nabla^{2} f_{i}(\hat{\mathbf{x}}_{i}) \succeq 0$.
    \end{enumerate}
    Let $\mathcal{X}_{c2}$ denote the set of \textit{ consensual second-order stationary} points of $F$.
\end{definition}

Theorem \ref{the: EXTRA convergence 2} demonstrates that \textbf{EXTRA} converges almost surely to a consensual second-order stationary point of $F$ under random initialization. This result is based on a nonatomic probability measure \cite{daneshmand2020second}, which refers to a probability measure that assigns zero probability to every individual point.
\begin{theorem}
\label{the: EXTRA convergence 2}
    Let Assumptions \ref{ass: Lipschitz}-\ref{ass: Mixing matrix} hold. Suppose the initialization $\hat{\mathbf{x}}^{0}$ is drawn according to a nonatomic probability measure. For any fixed step-size
    \begin{gather*}
        0 < \alpha < \min\{\frac{1-(\lambda_{1}(\mathbf{P}))^{2}}{6(L_{F})^{4} + 6(L_{F})^{3} + 48 L_{F} + 1},~\frac{\lambda_{\min}(\mathbf{V})}{L_{F}}\},
    \end{gather*}
    it holds that the sequence $\{\hat{\mathbf{x}}^{k}\}$ generated by \eqref{eq: EXTRA} satisfies that
    \begin{gather*}
        \mathbb{P} \left[\lim_{k\rightarrow\infty} \text{dist}(\hat{\mathbf{x}}^{k},\mathcal{X}_{c2}) = 0\right] = 1,
    \end{gather*}
    where the probability is taken over $\hat{\mathbf{x}}^{0} \in (\mathbb{R}^{n})^{m}$.
\end{theorem}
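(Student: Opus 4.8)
The plan is to use the dynamical-systems ``avoiding saddle points'' approach (as developed for related distributed methods in \cite{daneshmand2020second}): recast \textbf{EXTRA} as the iteration of a single fixed map, show the map is a diffeomorphism, show its Jacobian at every consensual strict saddle has an eigenvalue of modulus strictly larger than $1$, and conclude via the center--stable manifold theorem that the initializations converging to such a point form a Lebesgue-null set. Substituting the $\hat{\mathbf{x}}$-update into the $\hat{\mathbf{z}}$-update in \eqref{eq: EXTRA hybrid Jacobi-Gauss-Seidel} gives an autonomous system $\hat{\bm{\xi}}^{k+1}=g(\hat{\bm{\xi}}^{k})$ with $\hat{\bm{\xi}}=(\hat{\mathbf{x}},\hat{\mathbf{z}})$ and
\begin{gather*}
    g(\hat{\mathbf{x}},\hat{\mathbf{z}})=\bigl(\hat{\mathbf{W}}\hat{\mathbf{x}}+\hat{\mathbf{z}},\ (\hat{\mathbf{W}}-\hat{\mathbf{V}})\hat{\mathbf{x}}+\hat{\mathbf{z}}-\alpha[\nabla F(\hat{\mathbf{W}}\hat{\mathbf{x}}+\hat{\mathbf{z}})-\nabla F(\hat{\mathbf{x}})]\bigr).
\end{gather*}
A block elimination gives $\det Dg(\hat{\mathbf{x}},\hat{\mathbf{z}})=\det\bigl(\hat{\mathbf{V}}-\alpha\nabla^{2}F(\hat{\mathbf{x}})\bigr)$, and since $\hat{\mathbf{V}}\succeq\lambda_{\min}(\mathbf{V})\mathbf{I}_{mn}$ while $\|\nabla^{2}F(\hat{\mathbf{x}})\|\le L_{F}$, the bound $\alpha<\lambda_{\min}(\mathbf{V})/L_{F}$ keeps $\hat{\mathbf{V}}-\alpha\nabla^{2}F(\hat{\mathbf{x}})\succ\bm{0}$; the strong monotonicity of $\hat{\mathbf{x}}\mapsto\hat{\mathbf{V}}\hat{\mathbf{x}}-\alpha\nabla F(\hat{\mathbf{x}})$ then lets one invert $g$ explicitly, so $g$ is a global $C^{1}$ diffeomorphism---this is precisely where the second term of the step-size bound is used.

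I would then pin down the fixed points. Because $\bm{1}_{m}^{\top}(\mathbf{W}-\mathbf{V})=\bm{0}$, the map $g$ conserves $(\bm{1}_{m}\otimes\mathbf{I}_{n})^{\top}\bigl(\hat{\mathbf{z}}+\alpha\nabla F(\hat{\mathbf{x}})\bigr)$, and the initialization $\hat{\mathbf{z}}^{0}=-\alpha\nabla F(\hat{\mathbf{x}}^{0})$ makes it vanish, so the trajectory stays on the $g$-invariant manifold $\mathcal{M}=\{(\hat{\mathbf{x}},\hat{\mathbf{z}}):(\bm{1}_{m}\otimes\mathbf{I}_{n})^{\top}(\hat{\mathbf{z}}+\alpha\nabla F(\hat{\mathbf{x}}))=0\}$. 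On $\mathcal{M}$, $g(\hat{\bm{\xi}})=\hat{\bm{\xi}}$ forces $(\hat{\mathbf{W}}-\hat{\mathbf{V}})\hat{\mathbf{x}}=\bm{0}$, hence (Assumption~\ref{ass: Mixing matrix}) $\hat{\mathbf{x}}=\bm{1}_{m}\otimes x^{\star}$ and then $\hat{\mathbf{z}}=\bm{0}$, and the conserved quantity then yields $\sum_{i}\nabla f_{i}(x^{\star})=0$; thus the fixed points are exactly $\{(\bm{1}_{m}\otimes x^{\star},\bm{0}):\nabla f(x^{\star})=0\}$, in bijection with the set $\mathcal{X}_{c1}$ of consensual first-order stationary points (Definition~\ref{def: consensual first-order stationary point}), the consensual strict saddles being those with $\sum_{i}\nabla^{2}f_{i}(x^{\star})\not\succeq0$.

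The technical core is the spectrum of $Dg$ at a consensual strict saddle $\hat{\bm{\xi}}^{\star}=(\bm{1}_{m}\otimes x^{\star},\bm{0})$. Eliminating the $\hat{\mathbf{z}}$-block of the linearization reduces the characteristic equation to
\begin{gather*}
    \det\bigl[(\mu-1+\theta)\hat{\mathbf{W}}-\bigl(\theta+\mu(\mu-1)\bigr)\mathbf{I}_{mn}-\alpha(\mu-1)\nabla^{2}F(\hat{\mathbf{x}}^{\star})\bigr]=0 .
\end{gather*}
At $\alpha=0$ this splits over the eigenvalues $w$ of $\mathbf{W}$: $w=1$ gives the double root $\mu=1$ (a size-$2$ Jordan block on the consensus subspace), while each $w\in(-1,1)$ gives $\mu^{2}-(1+w)\mu+(\theta+(1-\theta)w)=0$, whose roots lie strictly inside the unit disk because $\mathbf{V}=\theta\mathbf{I}_{m}+(1-\theta)\mathbf{W}\succ\bm{0}$ and $\theta\in(0,\frac12]$. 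For small $\alpha>0$ the off-consensus roots stay inside; the degenerate root is resolved by the scaling $\mu=1+\alpha s$. Projecting the eigen-equation onto the consensus subspace, using $\hat{\mathbf{W}}(\bm{1}_{m}\otimes u)=\bm{1}_{m}\otimes u$ and $\frac1m(\bm{1}_{m}\otimes\mathbf{I}_{n})^{\top}\nabla^{2}F(\hat{\mathbf{x}}^{\star})(\bm{1}_{m}\otimes\mathbf{I}_{n})=\frac1m\sum_{i}\nabla^{2}f_{i}(x^{\star})$, and eliminating the disagreement component (which is $\mathcal{O}(\alpha^{2})$ because $\mathbf{I}_{m}-\mathbf{W}$ is invertible on $\bm{1}_{m}^{\perp}$), one is left with the scalar balance $s(\lambda-s)+\mathcal{O}(\alpha)=0$ for each eigenvalue $\lambda$ of $-\frac1m\sum_{i}\nabla^{2}f_{i}(x^{\star})$. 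Since $x^{\star}$ is a strict saddle, some $\lambda=\eta>0$, which yields a root $s=\eta+\mathcal{O}(\alpha)$ and hence an eigenvalue $\mu=1+\alpha\eta+\mathcal{O}(\alpha^{2})>1$ of $Dg(\hat{\bm{\xi}}^{\star})$ for $\alpha$ small.

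With this unstable eigenvalue in hand, the center--stable manifold theorem yields, for each consensual strict saddle, a local $C^{1}$ submanifold of $\mathcal{M}$ of dimension strictly less than $\dim\mathcal{M}$ containing every forward trajectory in $\mathcal{M}$ that converges to it; pulling back by the iterates of the diffeomorphism $g|_{\mathcal{M}}$ and taking a countable subcover of the strict-saddle set shows the set of initializations from which $\{\hat{\bm{\xi}}^{k}\}$ converges to a consensual strict saddle is Lebesgue-null, hence of zero probability under the nonatomic law of $\hat{\mathbf{x}}^{0}$ (one checks the null set stays null after restricting to the initialization submanifold $\{\hat{\mathbf{z}}=-\alpha\nabla F(\hat{\mathbf{x}})\}\subset\mathcal{M}$, using that the unstable direction above has nonzero consensus component, so that submanifold is not tangent to a center--stable subspace). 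Finally, Theorem~\ref{the: EXTRA convergence} and Assumption~\ref{ass: Coercivity} make $\{\hat{\mathbf{x}}^{k}\}$ bounded, its disagreement and $\hat{\mathbf{z}}^{k}=\hat{\mathbf{x}}^{k+1}-\hat{\mathbf{W}}\hat{\mathbf{x}}^{k}$ vanish, and $\frac1m(\bm{1}_{m}\bm{1}_{m}^{\top}\otimes\mathbf{I}_{n})\nabla F(\hat{\mathbf{x}}^{k})\to0$, so every accumulation point of $\{\hat{\bm{\xi}}^{k}\}$ lies in $\mathcal{X}_{c1}$; since (using the asymptotic regularity established in the proof of Theorem~\ref{the: EXTRA convergence}) almost surely none of them is a consensual strict saddle, each lies in $\mathcal{X}_{c2}$, giving $\textup{dist}(\hat{\mathbf{x}}^{k},\mathcal{X}_{c2})\to0$ almost surely. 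I expect the main obstacle to be the spectral step above: extracting a single unstable eigenvalue from the degenerate $\mu=1$ block of the $\alpha=0$ limit while controlling the $\mathcal{O}(\alpha)$ coupling that the block-diagonal but not consensus-respecting Hessian $\bigoplus_{i}\nabla^{2}f_{i}(x^{\star})$ induces between the consensus and disagreement subspaces---this is where the smallness of $\alpha$ in \eqref{eq: alpha range 1}, the bound $\lambda_{1}(\mathbf{P})<1$ of Lemma~\ref{lem: Mixing matrix}, and $\textup{null}\{\mathbf{I}_{m}-\mathbf{W}\}=\textup{span}\{\bm{1}\}$ all enter---together with the measure-theoretic bookkeeping on the constrained initialization manifold.
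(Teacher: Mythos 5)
Your overall architecture matches the paper's: recast \textbf{EXTRA} as iteration of a single $C^1$ map, show it is a diffeomorphism for $\alpha<\lambda_{\min}(\mathbf{V})/L_F$ via $\det \mathrm{D}T=\det(\hat{\mathbf{V}}-\alpha\nabla^2 F(\hat{\mathbf{x}}))$, show each consensual strict saddle is an unstable fixed point, invoke the (center-)stable manifold machinery to get a null stable set, and intersect with Theorem~\ref{the: EXTRA convergence} using $\mathcal{X}_{c1}=\mathcal{X}_{c2}\cup\mathcal{X}_{cs}$. (The paper works in the Jacobi coordinates $(\hat{\mathbf{x}},\hat{\mathbf{y}})$ with $\hat{\mathbf{y}}=\hat{\mathbf{z}}+\alpha\nabla F(\hat{\mathbf{x}})$, which is conjugate to your $(\hat{\mathbf{x}},\hat{\mathbf{z}})$ map by a unimodular diffeomorphism, and it cites Corollary~1 of \cite{lee2019first} rather than redoing the manifold argument; your remarks about the invariant manifold and the initialization slice are, if anything, more careful than the paper on the measure-theoretic bookkeeping.)

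The genuine gap is exactly where you predicted: the spectral step. Your argument for an eigenvalue of $\mathrm{D}g$ outside the unit disk is a perturbation expansion $\mu=1+\alpha s$ around the degenerate $\alpha=0$ limit, which only produces the unstable root $\mu=1+\alpha\eta+\mathcal{O}(\alpha^2)>1$ for \emph{unquantified} ``sufficiently small'' $\alpha$, and controlling the $\mathcal{O}(\alpha)$ coupling between the consensus and disagreement subspaces is, as you say, delicate. The theorem, however, asserts the conclusion for every $\alpha$ in an explicit range, so you would still have to verify that your perturbation threshold dominates that range. The paper sidesteps this entirely: it writes the characteristic equation as $\det\psi(\lambda)=0$ with the quadratic matrix pencil $\psi(\lambda)=\lambda^2\mathbf{I}_{mn}-(\hat{\mathbf{W}}+\mathbf{I}_{mn}-\alpha\nabla^2F(\hat{\mathbf{x}}))\lambda+\hat{\mathbf{V}}-\alpha\nabla^2F(\hat{\mathbf{x}})$, tests it with the consensual vector $\hat{\mathbf{v}}=\bm{1}_m\otimes\mathbf{v}$ (for which $\hat{\mathbf{W}}\hat{\mathbf{v}}=\hat{\mathbf{V}}\hat{\mathbf{v}}=\hat{\mathbf{v}}$), so the Rayleigh quotient collapses to the scalar quadratic $\lambda^2-(2-\alpha\mu)\lambda+1-\alpha\mu$ with $\mu=\lambda_{\min}(\nabla^2F(\hat{\mathbf{x}}))<0$, whose roots are $1$ and $1-\alpha\mu>1$; hence $\lambda_{\min}(\psi(\lambda))<0$ at some $\lambda>1$ while $\psi(\bar\lambda)\succ0$ for large $\bar\lambda$, and the intermediate value theorem yields a real eigenvalue of $\mathrm{D}T$ strictly greater than $1$ for \emph{every} $\alpha>0$. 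Replacing your asymptotic expansion with this exact pencil argument closes the gap and removes any hidden smallness condition on $\alpha$.
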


\section{Proofs}
\label{sec: Proofs}
\subsection{Proof of Lemma \ref{lem: Mixing matrix}}
\begin{proof}
    Let $\mathbf{P} = \mathbf{M} - \mathbf{N}$, where
    \begin{gather*}
        \begin{gathered}
            \mathbf{M} = 
            \begin{bmatrix}
                \mathbf{W} & \mathbf{I}_{m}\\
                \mathbf{W} -\mathbf{V} & \mathbf{I}_{m}
            \end{bmatrix},~
            \mathbf{N} = 
            \begin{bmatrix}
                \frac{1}{m} \bm{1}_{m}\bm{1}_{m}^{\top} & \bm{0}\\
                \bm{0} & \frac{1}{m} \bm{1}_{m}\bm{1}_{m}^{\top}
            \end{bmatrix}.
        \end{gathered}
    \end{gather*}
    To analyze the eigenvalue of $\mathbf{P} = \mathbf{M} - \mathbf{N}$, we first  analyze the eigenvalue of $\mathbf{M}$ by $\det{(\mathbf{M}-\lambda \cdot \mathbf{I}_{2m})} = 0$ with
    \begin{gather}
    \label{eq: det of M}
        \det{(\mathbf{M}-\lambda \mathbf{I}_{2m})}
        = \det{\left(
        \begin{bmatrix}
            \mathbf{W} - \lambda \mathbf{I}_{m} & \mathbf{I}_{m}\\
            \mathbf{W} -\mathbf{V} & \mathbf{I}_{m} - \lambda \mathbf{I}_{m}
        \end{bmatrix}
        \right)}.
    \end{gather}
    Applying Schur Complement, $\det{(\mathbf{M}-\lambda \mathbf{I}_{2m})} = (1-\lambda)^{m} \cdot \det{(\mathbf{W} - \lambda \mathbf{I}_{m} - \frac{1}{1-\lambda}\theta(\mathbf{W}-\mathbf{I}_{m}))} = (1-\lambda)^{m} \cdot \prod_{i=1}^{m} \omega^{i}(1-\frac{\theta}{1-\lambda}) + (\frac{\theta}{1-\lambda} - \lambda)$ by Assumption \ref{ass: Mixing matrix}, where $\omega^{i} \in (-1,1]$ is the $i$-th eigenvalue of $\mathbf{W}$ and $\theta \in (0,\frac{1}{2}]$. \textbf{Case 1}: For $i \in \mathcal{V}$ such that $\omega^{i} = 1$, $1$ is  the only one solution to $\det{(\mathbf{M}-\lambda \mathbf{I}_{2m})} = 0$. \textbf{Case 2}: For $i \in \mathcal{V}$ such that $\omega^{i} \ne 1$ and $(1 - \omega^{i})^2-4(1 - \omega^{i})\theta \ge 0$, all solutions satisfy
    \begin{gather}
    \label{eq: Determinant}
        \prod_{i=1}^{m} \lambda^{2} - (1+\omega^{i}) \lambda + \omega^{i}-\theta\omega^{i} + \theta = 0,
    \end{gather}
    and are all real. Since $1 - \omega^{i} > 0$ for all $i \in \mathcal{V}$ and $\theta \in (0,\frac{1}{2}]$, then $\abs{\lambda} \le \frac{1 + \omega^{i} + \sqrt{(1 - \omega^{i})^2-4(1 - \omega^{i})\theta}}{2} < \frac{1 + \omega^{i} + \sqrt{(1 - \omega^{i})^2}} {2} \le 1$. \textbf{Case 3}: For $i \in \mathcal{V}$ such that $\omega^{i} \ne 1$ and $(1 - \omega^{i})^2-4(1 - \omega^{i})\theta < 0$, all solutions satisfy \eqref{eq: Determinant} and are all complex. Then, $\abs{\lambda} \le \omega^{i} + \theta (1 - \omega^{i}) \le \frac{1}{2} + \frac{1}{2}\omega^{i} < 1$. Combining the three cases above, we conclude that all eigenvalues of \( \mathbf{M} \) have magnitude strictly less than 1, except for a simple eigenvalue at 1. We now examine the eigenvector associated with this eigenvalue. Suppose \( \lambda = 1 \), so there exists a nonzero vector \( \mathbf{v} = [\mathbf{v}_1^{\top}, \mathbf{v}_2^{\top}]^{\top} \in \mathbb{R}^{2m} \) such that \( \mathbf{M} \mathbf{v} = \mathbf{v} \). This implies \( (\mathbf{W} - \mathbf{V}) \mathbf{v}_1 = \bm{0} \), and by Assumption~\ref{ass: Mixing matrix}, \( (\mathbf{I}_m - \mathbf{W}) \mathbf{v}_1 = \bm{0} \). Using \( \mathbf{W} \mathbf{v}_1 + \mathbf{v}_2 = \mathbf{v}_1 \), we obtain the eigenvector \( \mathbf{v} = [\tfrac{1}{\sqrt{m}} \bm{1}^{\top}, \bm{0}^{\top}]^{\top} \). This is also an eigenvector of \( \mathbf{N} \) with eigenvalue 1, while all other eigenvectors of \( \mathbf{N} \) correspond to eigenvalue 0. Thus, subtracting \( \mathbf{N} \) from \( \mathbf{M} \) eliminates the eigenvalue at 1, yielding \( \lambda_1(\mathbf{P}) < 1 \).
\end{proof}

\subsection{Proof of Theorem \ref{the: EXTRA convergence}}
\begin{proof}
    We first make a change of coordinates to extract the consensus error dynamics, specifically the evolution of the distance between the agents' iterates and their average. Let $\bar{\mathbf{x}}^{k} = \hat{\mathbf{x}}^{k} - \tilde{\mathbf{x}}^{k}$, and 
    $\bar{\mathbf{z}}^{k} = \hat{\mathbf{z}}^{k} - \tilde{\mathbf{z}}^{k}$, where $\tilde{\mathbf{x}}^{k} = \mathbf{I}_{mn}^{\prime} \hat{\mathbf{x}}^{k}$, $\tilde{\mathbf{z}}^{k} = \mathbf{I}_{mn}^{\prime} \hat{\mathbf{z}}^{k}$ with $\mathbf{I}_{mn}^{\prime} = (\mathbf{I}_{mn} - \frac{1}{m} (\bm{1}_{m}\bm{1}_{m}^{\top}) \otimes \mathbf{I}_{n})$. Then multiplying $\mathbf{I}_{mn}^{\prime}$ on the both sides of \eqref{eq: EXTRA hybrid Jacobi-Gauss-Seidel} gives
    \begin{gather}
    \label{eq: Tilde dynamic}
        \begin{bmatrix}
            \tilde{\mathbf{x}}^{k+1}\\
            \tilde{\mathbf{z}}^{k+1}
        \end{bmatrix}
        = \hat{\mathbf{P}} 
        \begin{bmatrix}
            \tilde{\mathbf{x}}^{k}\\
            \tilde{\mathbf{z}}^{k}
        \end{bmatrix}
        - \alpha \mathbf{I}_{mn}^{\prime}
        \begin{bmatrix}
            \bm{0}\\
            \nabla F(\hat{\mathbf{x}}^{k+1}) - \nabla F(\hat{\mathbf{x}}^{k})
        \end{bmatrix},
    \end{gather}
    where $\hat{\mathbf{P}} \coloneqq \mathbf{P} \otimes \mathbf{I}_n$, as $\mathbf{P}$ defined in \eqref{eq: P matrix}. Let $G:~(\mathbb{R}^{n})^{m} \rightarrow \mathbb{R}$ and $G(\hat{\mathbf{x}}) \triangleq F(\frac{1}{m}(\bm{1}_{m}\bm{1}_{m}^{\top}) \otimes \mathbf{I}_{n} \cdot \hat{\mathbf{x}})$. Then, by Assumption \ref{ass: Lipschitz}, $G(\hat{\mathbf{x}})$ also has $L_{F}$-Lipschitz continuous gradient. Multiplying $\frac{1}{m} (\bm{1}_{m}\bm{1}_{m}^{\top}) \otimes \mathbf{I}_{n}$ on the both sides of \eqref{eq: EXTRA hybrid Jacobi-Gauss-Seidel} gives
    \begin{align}
    \label{eq: Bar dynamic}
        \begin{aligned}
            \bar{\mathbf{x}}^{k+1} &= \bar{\mathbf{x}}^{k} - \frac{\alpha}{m} (\bm{1}_{m}\bm{1}_{m}^{\top}) \otimes \mathbf{I}_{n} \cdot \nabla F(\hat{\mathbf{x}}^{k})\\
            &= \bar{\mathbf{x}}^{k} - \frac{\alpha}{m} (\bm{1}_{m}\bm{1}_{m}^{\top}) \otimes \mathbf{I}_{n} \cdot \nabla F(\bar{\mathbf{x}}^{k})\\
            &\quad- \frac{\alpha}{m} (\bm{1}_{m}\bm{1}_{m}^{\top}) \otimes \mathbf{I}_{n} \cdot (\nabla F(\hat{\mathbf{x}}^{k}) - \nabla F(\bar{\mathbf{x}}^{k}))\\
            &= \bar{\mathbf{x}}^{k} - \alpha \nabla G(\bar{\mathbf{x}}^{k}) - \alpha \Delta_{g}^{k},
        \end{aligned}
    \end{align}
    where $\Delta_{g}^{k} = \frac{1}{m} (\bm{1}_{m}\bm{1}_{m}^{\top}) \otimes \mathbf{I}_{n} \cdot (\nabla F(\hat{\mathbf{x}}^{k}) - \nabla F(\bar{\mathbf{x}}^{k}))$. 
    % By Assumption \ref{ass: Lipschitz} and \eqref{eq: Tilde dynamic}, $\norm{\tilde{\mathbf{\xi}}^{k+1}} \le \lambda_{1}(\mathbf{P}) \norm{\tilde{\mathbf{\xi}}^{k}} + \alpha L_{F} \norm{\hat{\mathbf{x}}^{k+1} - \hat{\mathbf{x}}^{k}}$. 
    Since $\norm{\hat{\mathbf{W}} - \frac{1}{m} (\bm{1}_{m}\bm{1}_{m}^{\top}) \otimes \mathbf{I}_{n} - \mathbf{I}_{mn}} \le 2$, it follows
    \begin{align}
    \label{eq: xk+1 and xk}
    \begin{aligned}
         &\norm{\hat{\mathbf{x}}^{k+1} - \hat{\mathbf{x}}^{k}} \le \norm{\bar{\mathbf{x}}^{k+1} - \bar{\mathbf{x}}^{k}} + \norm{\tilde{\mathbf{x}}^{k+1} - \tilde{\mathbf{x}}^{k}}\\
        &\quad\quad\overset{\eqref{eq: Tilde dynamic},~\eqref{eq: Bar dynamic}}{\le} \norm{\frac{1}{m}( (\bm{1}_{m}\bm{1}_{m}^{\top}) \otimes \mathbf{I}_{n}) \nabla F(\hat{\mathbf{x}}^{k})}\\
        &\quad\quad\quad+ \norm{(\hat{\mathbf{W}} - \frac{1}{m} (\bm{1}_{m}\bm{1}_{m}^{\top}) \otimes \mathbf{I}_{n} - \mathbf{I}_{mn}) \tilde{\mathbf{x}}^{k} + \tilde{\mathbf{z}}^{k}}\\
        &\quad\quad\le \norm{\frac{1}{m}( (\bm{1}_{m}\bm{1}_{m}^{\top}) \otimes \mathbf{I}_{n}) \nabla F(\hat{\mathbf{x}}^{k})} + 3 \norm{\tilde{\mathbf{\xi}}^{k}},
    \end{aligned}
    \end{align}
    where $\tilde{\mathbf{\xi}}^{k} = [(\tilde{\mathbf{x}}^{k})^{\top},(\tilde{\mathbf{z}}^{k})^{\top}]^{\top}$. Recall \eqref{eq: Tilde dynamic}. By Assumption \ref{ass: Lipschitz},
    \begin{align*}
         \norm{\tilde{\mathbf{\xi}}^{k+1}} &\overset{\eqref{eq: xk+1 and xk}}{\le} (\lambda_{1}(\mathbf{P}) + 3\alpha L_{F})
        \norm{\tilde{\mathbf{\xi}}^{k}}\\
        &\quad+ \alpha L_{F} \norm{\frac{1}{m}( (\bm{1}_{m}\bm{1}_{m}^{\top}) \otimes \mathbf{I}_{n}) (\nabla F(\hat{\mathbf{x}}^{k}))}\\
        &\le (\lambda_{1}(\mathbf{P}) + 3\alpha L_{F}) \norm{\tilde{\mathbf{\xi}}^{k}} + \alpha L_{F} \norm{\nabla G(\bar{\mathbf{x}}^{k})}\\
        &\quad+ \alpha L_{F} \norm{\frac{1}{m} ((\bm{1}_{m}\bm{1}_{m}^{\top}) \otimes \mathbf{I}_{n}) (\nabla F(\hat{\mathbf{x}}^{k}) - \nabla F(\bar{\mathbf{x}}^{k}))}\\
        &\le (\lambda_{1}(\mathbf{P}) + 4\alpha L_{F})
        \norm{\tilde{\mathbf{\xi}}^{k}} + \alpha L_{F} \norm{\nabla G(\bar{\mathbf{x}}^{k})}.
    \end{align*}
    By Young’s inequality $ab\le \frac{a^2}{2\epsilon} + \frac{\epsilon b^2}{2}$, for any fixed $\epsilon > 0$,
    \begin{multline*}
        \norm{\tilde{\mathbf{\xi}}^{k+1}}^{2} \le (1+\epsilon)(\lambda_{1}(\mathbf{P}) + 4\alpha L_{F})^{2}
        \norm{\tilde{\mathbf{\xi}}^{k}}^{2}\\
        + (1+\frac{1}{\epsilon}) (\alpha L_{F})^{2} \norm{\nabla G(\bar{\mathbf{x}}^{k})}^{2}.
    \end{multline*}
    Summing over $k$ gives that for any $k > 0$,
    \begin{multline}
    \label{eq: Sum of errors}
        \sum_{\kappa=0}^{k} \norm{\begin{bmatrix}
            \tilde{\mathbf{x}}^{\kappa}\\
            \tilde{\mathbf{z}}^{\kappa}
        \end{bmatrix}}^{2} \le (1+\epsilon)(\lambda_{1}(\mathbf{P}) + 4\alpha L_{F})^{2} \sum_{\kappa=0}^{k-1}
        \norm{\tilde{\mathbf{\xi}}^{\kappa}}^{2}\\
        + (1+\frac{1}{\epsilon}) (\alpha L_{F})^{2} \sum_{\kappa=0}^{k-1}  \norm{\nabla G(\bar{\mathbf{x}}^{\kappa})}^{2} + \norm{\tilde{\mathbf{\xi}}^{0}}^{2}.
    \end{multline}
    Again by Taylor’s theorem and Young's inequality $ab\le \frac{a^2}{2\rho} + \frac{\rho b^2}{2}$ with $\rho = 2L_{F}$, it holds that for any fixed $0 < \alpha \le \frac{1}{L_{F}}$,
    \begin{align*}
        &G(\bar{\mathbf{x}}^{k+1}) - G(\bar{\mathbf{x}}^{k})\\
        &\quad\le \nabla G(\bar{\mathbf{x}}^{k})^{\top}(\bar{\mathbf{x}}^{k+1} - \bar{\mathbf{x}}^{k}) + \frac{L_{F}}{2} \norm{\bar{\mathbf{x}}^{k+1} - \bar{\mathbf{x}}^{k}}^{2}\\
        &\quad\overset{\eqref{eq: Bar dynamic}}{\le} -\alpha \norm{\nabla G(\bar{\mathbf{x}}^{k})}^{2} - \alpha \nabla G(\bar{\mathbf{x}}^{k})^{\top} \Delta_{g}^{k}\\
        &\quad\quad + \frac{L_{F} \alpha^{2}}{2} \norm{\nabla G(\bar{\mathbf{x}}^{k}) + \Delta_{g}^{k}}^{2}\\
        &\quad\le (-\alpha+\frac{L_{F}\alpha^{2}} {2}) \norm{\nabla G(\bar{\mathbf{x}}^{k})}^{2} + (\alpha - L_{F} \alpha^{2})L_{F}\\
        &\quad\quad \norm{\nabla G(\bar{\mathbf{x}}^{k})} \cdot\norm{\tilde{\mathbf{x}}^{k}} + \frac{(L_{F})^{3}\alpha^{2}}{2} \norm{\tilde{\mathbf{x}}^{k}}^{2}\\
        &\quad\le -\frac{\alpha}{2} \norm{\nabla G(\bar{\mathbf{x}}^{k})}^{2} + \alpha L_{F} \norm{\nabla G(\bar{\mathbf{x}}^{k})} \cdot \norm{\tilde{\mathbf{x}}^{k}}\\
        &\quad\quad+ \frac{(L_{F})^{3}\alpha^{2}}{2} \norm{\tilde{\mathbf{x}}^{k}}^{2}\\
        &\quad\le -\frac{\alpha}{4} \norm{\nabla G(\bar{\mathbf{x}}^{k})}^{2} + (\frac{(L_{F})^{3}\alpha^{2}}{2} + (L_{F})^{2}\alpha) \norm{\tilde{\mathbf{x}}^{k}}^{2}.
    \end{align*}
    Summing over $k$ gives that for any $k > 0$,
    % \begin{multline*}
    %     G(\bar{\mathbf{x}}^{k}) - G(\bar{\mathbf{x}}^{0}) \le \sum_{\kappa=0}^{k-1} -\frac{\alpha}{4} \norm{\nabla G(\bar{\mathbf{x}}^{\kappa})}^{2}\\
    %     + \left(\frac{(L_{F})^{3}\alpha^{2}}{2} + (L_{F})^{2}\alpha\right) \norm{\tilde{\mathbf{x}}^{\kappa}}^{2},
    % \end{multline*}
    % and
    \begin{multline}
    \label{eq: Sum of gradients}
        \sum_{\kappa=0}^{k-1} \norm{\nabla G(\bar{\mathbf{x}}^{\kappa})}^{2} \le \frac{4}{\alpha} (G(\bar{\mathbf{x}}^{0}) - G(\bar{\mathbf{x}}^{\star}))\\
        + 4 \left(\frac{(L_{F})^{3}\alpha}{2} + (L_{F})^{2}\right) \sum_{\kappa=0}^{k-1}  \norm{\tilde{\mathbf{\xi}}^{\kappa}}^{2}.
    \end{multline}
    Let $S^{k} = \sum_{\kappa=0}^{k} \norm{\tilde{\mathbf{\xi}}^{\kappa}}^{2}$. Combining \eqref{eq: Sum of errors} and \eqref{eq: Sum of gradients} gives
    \begin{gather}
    \label{eq: Sum of errors system}
        S^{k+1} \le \lambda(\epsilon,\alpha) \cdot S^{k} + u(\epsilon,\alpha),
    \end{gather}
    where 
    \begin{multline*}
        \lambda(\epsilon,\alpha) = (1+\epsilon)(\lambda_{1}(\mathbf{P}) + 4\alpha L_{F})^{2}\\
        +  4(1+\frac{1}{\epsilon}) (\alpha L_{F})^{2} \left(\frac{(L_{F})^{3}\alpha}{2} + (L_{F})^{2}\right),
    \end{multline*}
    and 
    \begin{gather*}
        u(\epsilon,\alpha) = 4(1+\frac{1}{\epsilon})( L_{F})^{2} \alpha (G(\bar{\mathbf{x}}^{0}) - G(\bar{\mathbf{x}}^{\star})) + \norm{\tilde{\mathbf{\xi}}^{0}}^{2}.
    \end{gather*}
    Then, choosing $\epsilon = \alpha$ yields that for any fixed $\alpha > 0$ satisfying \eqref{eq: alpha range 1},
    \begin{gather*}
        0 < \alpha < \frac{1-(\lambda_{1}(\mathbf{P}))^{2}}{6(L_{F})^{4} + 6(L_{F})^{3} + 48 L_{F} + 1} < \frac{1}{L_{F}},
    \end{gather*}
    and
    \begin{gather*}
        \lambda(\epsilon,\alpha) < 1,~u(\epsilon,\alpha) \le 8(L_{F})^{2} (G(\bar{\mathbf{x}}^{0}) - G(\bar{\mathbf{x}}^{\star})) + \norm{\tilde{\mathbf{\xi}}^{0}}^{2}.
    \end{gather*}
    By Assumption \ref{ass: Coercivity}, $u(\epsilon,\alpha)$ is lower bounded and
    \begin{gather*}
        S^{k} \overset{\eqref{eq: Sum of errors system}}{\le} \lambda(\epsilon,\alpha)^{k} \cdot S^{0} + \frac{u(\epsilon,\alpha)}{1-\lambda(\epsilon,\alpha)}
    \end{gather*}
    implying $S^{k} = \sum_{\kappa=0}^{k} \norm{\tilde{\mathbf{\xi}}^{\kappa}}^{2}$ 
    and $\sum_{\kappa=0}^{k-1} \norm{\nabla G(\bar{\mathbf{x}}^{\kappa})}^{2}$ are uniformly bounded for any $k > 0$. Then, 
    \begin{multline*}
        \sum_{\kappa=0}^{k-1}\norm{\frac{1}{m} (\bm{1}_{m}\bm{1}_{m}^{\top}) \otimes \mathbf{I}_{n} \cdot \nabla F(\hat{\mathbf{x}}^{\kappa})}^{2}\\
        \le \sum_{\kappa=0}^{k-1} \norm{\nabla G(\bar{\mathbf{x}}^{\kappa})}^{2} + L_{F} \cdot \sum_{\kappa=0}^{k} \norm{\tilde{\mathbf{x}}^{\kappa}}^{2},
    \end{multline*}
    which is also uniformly bounded for any $k > 0$. Therefore, sequences $\left\{\frac{1}{k} \sum_{\kappa=0}^{k-1}\norm{\frac{1}{m} (\bm{1}_{m}\bm{1}_{m}^{\top}) \otimes \mathbf{I}_{n} \cdot \nabla F(\hat{\mathbf{x}}^{\kappa})}^{2}\right\}$ and $\left\{\frac{1}{k} \sum_{\kappa=0}^{k-1} \norm{\tilde{\mathbf{x}}^{\kappa}}^{2}\right\}$ both converge to $0$ at a rate of $\mathcal{O}(\frac{1}{k})$ as claimed, which implies the set of consensual first-order stationary points is asymptotically attractive.
\end{proof}

\subsection{Proof of Theorem \ref{the: EXTRA convergence 2}}
After establishing convergence to consensual first-order stationary points, we define the consensual strict saddle point for $F$ in \eqref{eq: Constrained optimization problem}, and show that \textbf{EXTRA} avoids converging to these points.
\begin{definition}
\label{def: consensual strict saddle point}
    For function $F$ in \eqref{eq: Constrained optimization problem}, a point $\hat{\mathbf{x}} \in (\mathbb{R}^{n})^{m}$ is said to be a consensual strict saddle point if it satisfies the following:
    \begin{enumerate}
        \item $\hat{\mathbf{x}} = \frac{1}{m} (\bm{1}_{m}\bm{1}_{m}^{\top} \otimes \mathbf{I}_{n})\hat{\mathbf{x}}$;
        \item $(\bm{1}_{m} \otimes \mathbf{I}_{n})^{\top} \nabla F(\hat{\mathbf{x}}) = \sum_{i=1}^{m} \nabla f_{i}(\hat{\mathbf{x}}_{i}) = 0$;
        \item $\lambda_{\min}((\bm{1}_{m} \otimes \mathbf{I}_{n})^{\top} \nabla^{2} F(\hat{\mathbf{x}}) (\bm{1}_{m} \otimes \mathbf{I}_{n})) < 0$,
    \end{enumerate}
    where $(\bm{1}_{m} \otimes \mathbf{I}_{n})^{\top} \nabla^{2} F(\hat{\mathbf{x}}) (\bm{1}_{m} \otimes \mathbf{I}_{n}) = \sum_{i=1}^{m} \nabla^{2} f_{i}(\hat{\mathbf{x}}_{i})$. Let $\mathcal{X}_{cs}$ denote the set of consensual strict saddle points of $F$. 
\end{definition}

To analyze the second order property, the key idea is to identify a self-map $T$ that represents \textbf{EXTRA}, such that the stable set of the strict saddle points of $F$ has zero measure within the domain of the mapping. Particularly, the proof has two main steps. First, we demonstrate that the consensual strict saddle point is an unstable fixed point of \textbf{EXTRA} (see Lemma \ref{lem: Fixed point}). Next, we establish the conditions under which the mapping $T$ is a $C^{1}$ diffeomorphis (see Lemma \ref{lem: Det}). By combining these results, we arrive at Proposition \ref{pro: EXTRA non-convergence}. To identify the corresponding mapping in \textbf{EXTRA} updates, recall \eqref{eq: EXTRA hybrid Jacobi-Gauss-Seidel}. 
% \begin{gather*}
%         \hat{\mathbf{x}}^{k+1} = \hat{\mathbf{W}} \hat{\mathbf{x}}^{k}  + \hat{\mathbf{z}}^{k},\\
%         \hat{\mathbf{z}}^{k+1} = (\hat{\mathbf{W}} - \hat{\mathbf{V}}) \hat{\mathbf{x}}^{k} + \hat{\mathbf{z}}^{k} - \alpha [\nabla F(\hat{\mathbf{x}}^{k+1}) - \nabla F(\hat{\mathbf{x}}^{k})]
% \end{gather*}
% with $\hat{\mathbf{z}}^{0} = -\alpha \nabla F(\hat{\mathbf{x}}^{0})$. 
By defining $\hat{\mathbf{z}}^{k} = \hat{\mathbf{y}}^{k} - \alpha \nabla F(\hat{\mathbf{x}}^{k})$, \textbf{EXTRA} in Jacobi form can be formulated as
\begin{gather}
\label{eq: EXTRA Jacobi}
    \begin{gathered}
        \hat{\mathbf{x}}^{k+1} = \hat{\mathbf{W}} \hat{\mathbf{x}}^{k} + \hat{\mathbf{y}}^{k} - \alpha \nabla F(\hat{\mathbf{x}}^{k}),\\
        \hat{\mathbf{y}}^{k+1} = (\hat{\mathbf{W}} - \hat{\mathbf{V}}) \hat{\mathbf{x}}^{k} + \hat{\mathbf{y}}^{k}        
    \end{gathered}
\end{gather}
with $\hat{\mathbf{y}}^{0} = \bm{0}$, which gives the corresponding mapping $T(\hat{\mathbf{x}},\hat{\mathbf{y}}): (\mathbb{R}^{n})^{m} \times (\mathbb{R}^{n})^{m} \rightarrow (\mathbb{R}^{n})^{m} \times (\mathbb{R}^{n})^{m}$ in \textbf{EXTRA} updates, 
\begin{gather*}
    \begin{bmatrix}
        \hat{\mathbf{x}}^{k+1}\\
        \hat{\mathbf{y}}^{k+1}
    \end{bmatrix}
    = T(\hat{\mathbf{x}}^{k},\hat{\mathbf{y}}^{k}),
\end{gather*}
where
\begin{gather}
\label{eq: Mapping}
    T(\hat{\mathbf{x}},\hat{\mathbf{y}}) = 
    \begin{bmatrix}
        \hat{\mathbf{W}} \hat{\mathbf{x}} + \hat{\mathbf{y}} - \alpha \nabla F(\hat{\mathbf{x}})\\
        (\hat{\mathbf{W}} - \hat{\mathbf{V}}) \hat{\mathbf{x}} + \hat{\mathbf{y}}
    \end{bmatrix}.
\end{gather}

% Next, we define the set of unstable fixed points of a given mapping.
We next define the unstable fixed points of the mapping.
\begin{definition}
    Given a mapping $h: \mathcal{X} \rightarrow \mathcal{X}$, let $\mathcal{A}_{h}$ be the set of the unstable fixed points of $h$, $i.e.$,
    \begin{gather*}
        \mathcal{A}_{h} \coloneqq \{\mathbf{x}:~h(\mathbf{x}) = \mathbf{x},~\lambda_{1}(\mathrm{D}h(\mathbf{x})) > 1\},
    \end{gather*}
    where $\mathrm{D}h(\mathbf{x})$ is the differential of $h$ at $\mathbf{x}$.
\end{definition}

Next, Proposition \ref{pro: EXTRA non-convergence} demonstrates that \textbf{EXTRA} avoids convergence to consensual strict saddle points of $F$.
\begin{lemma}
\label{lem: Fixed point}
    Let Assumptions \ref{ass: Lipschitz}, \ref{ass: Network}, \ref{ass: Mixing matrix} hold. Suppose $\hat{\mathbf{x}}$ is a consensual strict saddle point of $F$. For any fixed $\alpha > 0$, $[\hat{\mathbf{x}}^{\top},~\alpha \nabla F(\hat{\mathbf{x}})^{\top}]^{\top}$ is an unstable fixed point of \textbf{EXTRA}, $i.e.$, $\mathcal{X}_{cs} \times \mathcal{G}(\mathcal{X}_{cs}) \subseteq \mathcal{A}_{T}$, where $\mathcal{G}(\mathcal{X}_{cs}) = \{\nabla F(\hat{\mathbf{x}}):~\hat{\mathbf{x}} \in \mathcal{X}_{cs}\}$.
\end{lemma}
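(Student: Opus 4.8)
The plan is to compute the differential $\mathrm{D}T$ at the candidate fixed point and show that its largest-magnitude eigenvalue exceeds $1$. First I would verify the fixed-point claim: plugging $\hat{\mathbf{y}} = \alpha\nabla F(\hat{\mathbf{x}})$ into \eqref{eq: Mapping}, the first block gives $\hat{\mathbf{W}}\hat{\mathbf{x}} + \alpha\nabla F(\hat{\mathbf{x}}) - \alpha\nabla F(\hat{\mathbf{x}}) = \hat{\mathbf{W}}\hat{\mathbf{x}}$, and since $\hat{\mathbf{x}}$ is consensual we have $\hat{\mathbf{W}}\hat{\mathbf{x}} = \hat{\mathbf{x}}$ because $\mathbf{W}\bm{1} = \bm{1}$; the second block gives $(\hat{\mathbf{W}} - \hat{\mathbf{V}})\hat{\mathbf{x}} + \alpha\nabla F(\hat{\mathbf{x}}) = \alpha\nabla F(\hat{\mathbf{x}})$ since $(\mathbf{W}-\mathbf{V})\bm{1} = (1-\theta)(\mathbf{W}-\mathbf{I})\bm{1} = \bm{0}$. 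So $[\hat{\mathbf{x}}^\top,\alpha\nabla F(\hat{\mathbf{x}})^\top]^\top$ is indeed fixed.

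Next I would write down the Jacobian. Differentiating \eqref{eq: Mapping},
\begin{gather*}
    \mathrm{D}T(\hat{\mathbf{x}},\hat{\mathbf{y}}) =
    \begin{bmatrix}
        \hat{\mathbf{W}} - \alpha \nabla^{2} F(\hat{\mathbf{x}}) & \mathbf{I}_{mn}\\
        \hat{\mathbf{W}} - \hat{\mathbf{V}} & \mathbf{I}_{mn}
    \end{bmatrix}.
\end{gather*}
Evaluated at the strict saddle, $\nabla^{2}F(\hat{\mathbf{x}})$ is block diagonal with blocks $\nabla^{2}f_i(\hat{\mathbf{x}}_i)$. The key is to restrict attention to the consensus subspace: the vectors of the form $[(\bm{1}_m\otimes\mathbf{u})^\top, (\bm{1}_m\otimes\mathbf{v})^\top]^\top$ with $\mathbf{u},\mathbf{v}\in\mathbb{R}^n$ form an invariant subspace of $\mathrm{D}T$ under which $\hat{\mathbf{W}}$ and $\hat{\mathbf{V}}$ act as the identity (again by $\mathbf{W}\bm{1}=\mathbf{V}\bm{1}=\bm{1}$), so on this subspace $\mathrm{D}T$ reduces to the $2n\times 2n$ matrix
\begin{gather*}
    \begin{bmatrix}
        \mathbf{I}_n - \tfrac{\alpha}{m} H & \mathbf{I}_n\\
        \bm{0} & \mathbf{I}_n
    \end{bmatrix},
    \qquad H \coloneqq \sum_{i=1}^{m}\nabla^{2}f_i(\hat{\mathbf{x}}_i),
\end{gather*}
using $(\bm{1}_m\otimes\mathbf{I}_n)^\top\nabla^2F(\hat{\mathbf{x}})(\bm{1}_m\otimes\mathbf{I}_n) = H$ and rescaling appropriately. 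By Definition~\ref{def: consensual strict saddle point}, $\lambda_{\min}(H) < 0$; let $\mathbf{q}$ be a corresponding unit eigenvector, $H\mathbf{q} = \mu\mathbf{q}$ with $\mu<0$. Then on the two-dimensional $\mathrm{D}T$-invariant subspace spanned by $[\mathbf{q}^\top,\bm{0}^\top]^\top$ and $[\bm{0}^\top,\mathbf{q}^\top]^\top$, $\mathrm{D}T$ acts as the upper-triangular $2\times 2$ matrix $\left[\begin{smallmatrix} 1-\alpha\mu/m & 1\\ 0 & 1\end{smallmatrix}\right]$ (up to the $m$-scaling), whose eigenvalue $1 - \alpha\mu/m$ is strictly greater than $1$ since $\alpha>0$ and $\mu<0$. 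Hence $\lambda_1(\mathrm{D}T) \ge 1 - \alpha\mu/m > 1$, so the point lies in $\mathcal{A}_T$. Ranging over all $\hat{\mathbf{x}}\in\mathcal{X}_{cs}$ gives the claimed inclusion $\mathcal{X}_{cs}\times\mathcal{G}(\mathcal{X}_{cs})\subseteq\mathcal{A}_T$.

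The main obstacle is handling the Kronecker/block structure cleanly — one must be careful that the consensus subspace is genuinely $\mathrm{D}T$-invariant (which needs $\mathbf{W}$ and $\mathbf{V}$ to fix $\bm{1}$, guaranteed by Assumption~\ref{ass: Mixing matrix}) and that the relevant eigenvalue of the full $2mn\times 2mn$ Jacobian is captured by this restriction rather than being dominated by some orthogonal-complement eigenvalue. Since we only need $\lambda_1(\mathrm{D}T)>1$ (a lower bound on the spectral radius), exhibiting one eigenvalue strictly above $1$ on an invariant subspace suffices, so we do not need to control the full spectrum — this is what makes the argument short. A minor point to state carefully is the normalization: the reduction of $\hat{\mathbf{W}}\hat{\mathbf{x}}$ etc. on consensus vectors versus the factor $\tfrac1m$ appearing in the definition of the consensual Hessian; I would fix a concrete parametrization $\hat{\mathbf{x}} = \bm{1}_m\otimes\mathbf{u}$ at the outset to avoid bookkeeping errors.
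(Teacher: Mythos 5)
Your fixed-point verification and your Jacobian $\mathrm{D}T$ match the paper, and you have correctly located where the work lies. But the pivotal step fails: the consensus subspace $\{[(\bm{1}_m\otimes\mathbf{u})^{\top},(\bm{1}_m\otimes\mathbf{v})^{\top}]^{\top}\}$ is \emph{not} $\mathrm{D}T$-invariant. Invariance is not settled by $\mathbf{W}\bm{1}=\mathbf{V}\bm{1}=\bm{1}$, because the $(1,1)$ block also contains $-\alpha\nabla^{2}F(\hat{\mathbf{x}})$, which is block diagonal with \emph{heterogeneous} blocks $\nabla^{2}f_i(\hat{\mathbf{x}}_i)$ (the $f_i$ are distinct local functions even though the point is consensual). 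Hence $\nabla^{2}F(\hat{\mathbf{x}})(\bm{1}_m\otimes\mathbf{u})=[(\nabla^{2}f_1(\hat{\mathbf{x}}_1)\mathbf{u})^{\top},\dots,(\nabla^{2}f_m(\hat{\mathbf{x}}_m)\mathbf{u})^{\top}]^{\top}$ is generally not of the form $\bm{1}_m\otimes(\cdot)$. The $2n\times 2n$ matrix you exhibit is therefore only the \emph{compression} of $\mathrm{D}T$ onto that subspace, not a restriction to an invariant subspace, and since $\mathrm{D}T$ is not symmetric, eigenvalues of a compression give no lower bound on the spectral radius of the full matrix (e.g.\ $\left[\begin{smallmatrix}0&2\\0&0\end{smallmatrix}\right]$ has spectral radius $0$ while its compression onto $\mathrm{span}\{(1,1)\}$ equals $1$). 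So the concluding inequality $\lambda_1(\mathrm{D}T)\ge 1-\alpha\mu/m$ is unjustified: $1-\alpha\mu/m$ need not be an eigenvalue of $\mathrm{D}T$ at all.

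The paper's proof is designed precisely to avoid this trap. It takes the Schur complement of $\det(\mathrm{D}T-\lambda\mathbf{I}_{2mn})$ to reduce the eigenvalue problem to $\det(\psi(\lambda))=0$ with the \emph{symmetric} quadratic pencil $\psi(\lambda)=\lambda^{2}\mathbf{I}_{mn}-(\hat{\mathbf{W}}+\mathbf{I}_{mn}-\alpha\nabla^{2}F(\hat{\mathbf{x}}))\lambda+\hat{\mathbf{V}}-\alpha\nabla^{2}F(\hat{\mathbf{x}})$. The consensus vector $\hat{\mathbf{v}}=\bm{1}_m\otimes\mathbf{v}$ is then used only as a Rayleigh \emph{test} vector, yielding $\lambda_{\min}(\psi(\lambda_0))\le R(\hat{\mathbf{v}})<0$ at a suitable $\lambda_0>1$ (this is where condition 3 of Definition~\ref{def: consensual strict saddle point}, $\lambda_{\min}(\sum_i\nabla^{2}f_i)<0$, enters — exactly your computation, but as an upper bound rather than an eigenvalue). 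Combined with $\psi(\bar{\lambda})\succ 0$ for $\bar{\lambda}$ large and continuity of $\lambda\mapsto\lambda_{\min}(\psi(\lambda))$, the intermediate value theorem produces a real $\lambda^{*}>1$ with $\det(\psi(\lambda^{*}))=0$, i.e.\ a genuine eigenvalue of $\mathrm{D}T$ exceeding $1$. Your argument can be repaired by replacing the invariant-subspace claim with this test-vector-plus-continuity step; the rest of your write-up (fixed-point check, Jacobian, use of the strict-saddle condition) carries over intact.
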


\begin{proof}
    By Assumption \ref{ass: Lipschitz} and \eqref{eq: Mapping},
    \begin{gather}
    \label{eq: Det of mapping}
        \mathrm{D}T(\hat{\mathbf{x}},\hat{\mathbf{y}}) = 
        \begin{bmatrix}
            \hat{\mathbf{W}} - \alpha \nabla^{2} F(\hat{\mathbf{x}}) &  \mathbf{I}_{mn}\\
            \hat{\mathbf{W}} - \hat{\mathbf{V}} & \mathbf{I}_{mn}
        \end{bmatrix}.
    \end{gather}
    Suppose $\hat{\mathbf{x}}$ is a consensual strict saddle point of $F$. Since $\nabla^{2} F(\hat{\mathbf{x}})$ is a block diagonal matrix, by the definition of consensual strict saddle point (see Definition \ref{def: consensual strict saddle point}), it holds that $\lambda_{\min}(\nabla^{2} F(\hat{\mathbf{x}})) \le \lambda_{\min}((\bm{1}_{m} \otimes \mathbf{I}_{n})^{\top} \nabla^{2} F(\hat{\mathbf{x}}) (\bm{1}_{m} \otimes \mathbf{I}_{n})) < 0$. To analyze the eigenvalues of $\mathrm{D}T(\hat{\mathbf{x}},\hat{\mathbf{y}})$, we focus on $\det(\mathrm{D}T(\hat{\mathbf{x}},\hat{\mathbf{y}}) - \lambda \mathbf{I}_{2mn}) = 0$.
    % \begin{gather*}
    %     \det(\mathrm{D}T(\hat{\mathbf{x}},\hat{\mathbf{y}}) - \lambda \mathbf{I}_{2mn}) = 0.
    % \end{gather*}
    By Schur Completement,
    \begin{multline*}
        \det(\mathrm{D}T(\hat{\mathbf{x}},\hat{\mathbf{y}}) - \lambda \mathbf{I}_{2mn}) = (1-\lambda)^{mn} \cdot\\
        \det(\hat{\mathbf{W}} - \alpha \nabla^{2} F(\hat{\mathbf{x}}) - \lambda \mathbf{I}_{mn} - \frac{1}{1-\lambda} (\hat{\mathbf{W}} - \hat{\mathbf{V}})) = 0
    \end{multline*}
    when $\lambda \ne 1$. Therefore, it is needed to solve $(1-\lambda)^{mn} = 0$ and $\det((1-\lambda)(\hat{\mathbf{W}} - \alpha \nabla^{2} F(\hat{\mathbf{x}}) - \lambda \mathbf{I}_{mn}) +  \hat{\mathbf{V}} - \hat{\mathbf{W}}) = 0$. Define $\psi(\lambda) = \lambda^{2} \mathbf{I}_{mn} - (\hat{\mathbf{W}} + \mathbf{I}_{mn} - \alpha \nabla^{2} F(\hat{\mathbf{x}})) \lambda + \hat{\mathbf{V}} - \alpha \nabla^{2} F(\hat{\mathbf{x}})$. Trivially, for sufficient large $\bar{\lambda} > 1$, $\det(\psi(\bar{\lambda})) > 0$.
    Next, for any $\mathbf{v} \in \mathbb{R}^{n}$, by choosing $\hat{\mathbf{v}} = \bm{1}_{m} \otimes \mathbf{v}$, by involving the Rayleigh quotient, we have $R(\hat{\mathbf{v}}) = \frac{\hat{\mathbf{v}}^{\top} \psi(\lambda) \hat{\mathbf{v}}}{\|\hat{\mathbf{v}}\|^{2}}$
    implying
    \begin{multline*}
       \lambda_{\min} (\psi(\lambda)) \le R(\hat{\mathbf{v}}) \\
       = \lambda^{2} - (2-\alpha\lambda_{\min}(\nabla^{2} F(\hat{\mathbf{x}}))) \lambda + 1-\alpha\lambda_{\min}(\nabla^{2} F(\hat{\mathbf{x}})).
    \end{multline*}
    Given $\alpha > 0$, since $\lambda_{\min}(\nabla^{2} F(\hat{\mathbf{x}})) < 0$, for $\lambda = 1 - \alpha\lambda_{\min}(\nabla^{2} F(\hat{\mathbf{x}})) > 1$, $\lambda_{\min} (\psi(\lambda)) \le -\frac{1}{4} \alpha^{2} (\lambda_{\min} (\psi(\lambda)))^{2} < 0$. By the continuity of $\psi(\lambda)$, there exists $\lambda > 1$ such that $\det(\psi(\lambda)) = 0$, which means $\det(\mathrm{D}T(\hat{\mathbf{x}},\hat{\mathbf{y}}) - \lambda \mathbf{I}_{2mn}) = 0$.
\end{proof}

\begin{lemma}
\label{lem: Det}
    Let Assumptions \ref{ass: Lipschitz}, \ref{ass: Network}, \ref{ass: Mixing matrix} hold. Suppose $\hat{\mathbf{x}}$ is a consensual strict saddle point of $F$. For any fixed $0 < \alpha <\frac{\lambda_{\min}(\mathbf{V})}{L_{F}}$, $\det(\mathrm{D}T(\hat{\mathbf{x}},\alpha \nabla F(\hat{\mathbf{x}}))) \ne 0$.
\end{lemma}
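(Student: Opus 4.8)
The plan is to show that the Jacobian matrix $\mathrm{D}T(\hat{\mathbf{x}},\alpha\nabla F(\hat{\mathbf{x}}))$ in \eqref{eq: Det of mapping} has no zero eigenvalue, which by the Schur complement computation already carried out in the proof of Lemma~\ref{lem: Fixed point} reduces to showing $\det(\psi(0)) \ne 0$, where $\psi(\lambda) = \lambda^{2} \mathbf{I}_{mn} - (\hat{\mathbf{W}} + \mathbf{I}_{mn} - \alpha \nabla^{2} F(\hat{\mathbf{x}})) \lambda + \hat{\mathbf{V}} - \alpha \nabla^{2} F(\hat{\mathbf{x}})$. Indeed, $\lambda = 0$ is not among the $\lambda = 1$ block (since $1 \ne 0$), so $\det(\mathrm{D}T(\hat{\mathbf{x}},\alpha\nabla F(\hat{\mathbf{x}}))) = 0$ iff $\det(\psi(0)) = 0$. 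Now $\psi(0) = \hat{\mathbf{V}} - \alpha \nabla^{2} F(\hat{\mathbf{x}}) = \mathbf{V}\otimes\mathbf{I}_n - \alpha\nabla^2 F(\hat{\mathbf{x}})$.

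First I would note that $\hat{\mathbf{V}} = \mathbf{V}\otimes\mathbf{I}_n$ is symmetric positive definite by Assumption~\ref{ass: Mixing matrix}, with $\lambda_{\min}(\hat{\mathbf{V}}) = \lambda_{\min}(\mathbf{V}) > 0$. Also $\nabla^2 F(\hat{\mathbf{x}})$ is symmetric with $\|\nabla^2 F(\hat{\mathbf{x}})\| \le L_F$ by Assumption~\ref{ass: Lipschitz} (gradient-Lipschitz constant bounds the Hessian norm), so $\alpha\nabla^2 F(\hat{\mathbf{x}}) \preceq \alpha L_F \mathbf{I}_{mn}$. Hence for $0 < \alpha < \lambda_{\min}(\mathbf{V})/L_F$ we get
\begin{gather*}
    \psi(0) = \hat{\mathbf{V}} - \alpha\nabla^2 F(\hat{\mathbf{x}}) \succeq \lambda_{\min}(\mathbf{V})\mathbf{I}_{mn} - \alpha L_F \mathbf{I}_{mn} = (\lambda_{\min}(\mathbf{V}) - \alpha L_F)\mathbf{I}_{mn} \succ \bm{0},
\end{gather*}
so $\psi(0)$ is positive definite and in particular nonsingular, giving $\det(\psi(0)) \ne 0$ and therefore $\det(\mathrm{D}T(\hat{\mathbf{x}},\alpha\nabla F(\hat{\mathbf{x}}))) \ne 0$.

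The argument is essentially immediate once the Schur complement reduction from the previous lemma is reused, so there is no real obstacle; the only points requiring a line of justification are (i) that the $\lambda=1$ factor $(1-\lambda)^{mn}$ contributes nothing at $\lambda = 0$, so that the determinant vanishes exactly when $\psi(0)$ is singular, and (ii) the bound $\|\nabla^2 F(\hat{\mathbf{x}})\| \le L_F$, which follows from the $L_F$-Lipschitz continuity of $\nabla F$ together with the block-diagonal structure of $\nabla^2 F$. I would present the proof in that order: invoke the Schur complement identity to reduce to $\det(\psi(0))$, bound the Hessian, and conclude positive definiteness of $\psi(0)$ from the step-size condition.
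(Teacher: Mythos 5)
Your proposal is correct and follows essentially the same route as the paper: both reduce $\det(\mathrm{D}T(\hat{\mathbf{x}},\alpha\nabla F(\hat{\mathbf{x}})))$ via the Schur complement (your evaluation of $\psi(\lambda)$ at $\lambda=0$ is exactly the paper's direct Schur-complement computation) to $\det(\hat{\mathbf{V}} - \alpha\nabla^{2}F(\hat{\mathbf{x}}))$, and then conclude positive definiteness from $\alpha L_{F} < \lambda_{\min}(\mathbf{V})$. Your explicit justification of $\|\nabla^{2}F(\hat{\mathbf{x}})\|\le L_{F}$ from gradient Lipschitzness is a small, welcome addition that the paper leaves implicit.
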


\begin{proof}
By Schur Complement,
    \begin{align*}
        \det(\mathrm{D}T(\hat{\mathbf{x}},\alpha \nabla F(\hat{\mathbf{x}}))) &\overset{\eqref{eq: Det of mapping}}{=}  \det(\mathbf{I}_{mn}) \cdot \det(\hat{\mathbf{W}} - \alpha \nabla^{2} F(\hat{\mathbf{x}})\\
        &\quad+ \hat{\mathbf{V}} - \hat{\mathbf{W}})\\
        &= \det(\hat{\mathbf{V}} - \alpha \nabla^{2} F(\hat{\mathbf{x}})).
    \end{align*}
    Since $\mathbf{V} \succ \bm{0}$, choosing $\alpha < \frac{\lambda_{\min}(\mathbf{V})}{L_{F}}$ gives $\hat{\mathbf{V}} - \alpha \nabla^{2} F(\hat{\mathbf{x}}) \succ 0$, which implies $\det(\mathrm{D}T(\hat{\mathbf{x}},\alpha \nabla F(\hat{\mathbf{x}}))) \ne 0$ as claimed.
\end{proof}

\begin{proposition}
\label{pro: EXTRA non-convergence}
    Let Assumptions \ref{ass: Lipschitz}-\ref{ass: Mixing matrix} holds. Suppose the initialization $\hat{\mathbf{x}}^{0}$ is drawn according to a nonatomic probability measure.  For any fixed $0 < \alpha < \frac{\lambda_{\min}(\mathbf{V})}{L_{F}}$, it holds that the sequence $\{\hat{\mathbf{x}}^{k}\}$ generated by \textbf{EXTRA} satisfies that
    \begin{gather*}
        \mathbb{P}\left[\lim_{k \rightarrow \infty} \hat{\mathbf{x}}^{k} \in \mathcal{X}_{cs}\right] = 0,
    \end{gather*}
    where the probability is taken over $\hat{\mathbf{x}}^{0} \in (\mathbb{R}^{n})^{m}$.
\end{proposition}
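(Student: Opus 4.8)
The plan is to invoke the center-stable manifold theorem, following the now-standard template from the ``first-order methods avoid saddles'' literature (Lee et al., and the adaptation to distributed optimization in \cite{daneshmand2020second}). First I would assemble the three ingredients already established: (i) by Lemma~\ref{lem: Fixed point}, every point $\hat{\bm\xi} := [\hat{\mathbf{x}}^{\top},\alpha\nabla F(\hat{\mathbf{x}})^{\top}]^{\top}$ with $\hat{\mathbf{x}} \in \mathcal{X}_{cs}$ is a fixed point of the map $T$ in \eqref{eq: Mapping} at which $\mathrm{D}T$ has a spectral radius exceeding $1$, i.e., $\mathcal{X}_{cs}\times\mathcal{G}(\mathcal{X}_{cs}) \subseteq \mathcal{A}_{T}$; (ii) by Lemma~\ref{lem: Det}, for $0<\alpha<\lambda_{\min}(\mathbf{V})/L_{F}$ we have $\det(\mathrm{D}T)\neq 0$ everywhere, and since $T$ is globally Lipschitz with Lipschitz inverse on its range (the linear part is invertible and $\nabla F$ is $L_F$-Lipschitz), $T$ is a $C^{1}$ diffeomorphism onto its image; (iii) $\hat{\mathbf{y}}^{0}=\bm 0$, so the \textbf{EXTRA} iterates live on the affine slice $\{\hat{\mathbf{y}}=\bm 0\}$ — but note that after one step $\hat{\mathbf{z}}^{k}$ (equivalently $\hat{\mathbf{y}}^{k}$) is no longer zero, so one must track the full state $(\hat{\mathbf{x}}^{k},\hat{\mathbf{y}}^{k})$ and observe that the initialization manifold $\{\hat{\mathbf{y}}=\bm 0\}$ has positive measure in the ambient space only after pushing forward the nonatomic law of $\hat{\mathbf{x}}^{0}$; the nonatomicity of the law of $(\hat{\mathbf{x}}^0,\bm 0)$ on this slice is what matters.

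The core argument: define the global stable set $\mathcal{W} := \{\bm\xi^{0} : \lim_{k\to\infty} T^{k}(\bm\xi^{0}) \in \mathcal{A}_{T}\}$. I would show $\mathcal{W}$ has Lebesgue measure zero. For each fixed point $\bm\xi^{*}\in\mathcal{A}_{T}$, the center-stable manifold theorem gives a local $C^{1}$ embedded submanifold $W^{cs}_{\mathrm{loc}}(\bm\xi^{*})$ whose dimension is strictly less than $2mn$ (because $\mathrm{D}T(\bm\xi^{*})$ has at least one eigenvalue of modulus $>1$), hence a set of measure zero; any trajectory converging to $\bm\xi^{*}$ must eventually enter this local manifold. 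Using that $T$ is a diffeomorphism, the backward images $T^{-k}(W^{cs}_{\mathrm{loc}}(\bm\xi^{*}))$ remain measure zero, and $\mathcal{W} \cap (\text{a fixed neighborhood of } \mathcal{A}_T)$ is contained in a countable union of such preimages once one covers $\mathcal{A}_{T}$ by countably many charts (using second countability / Lindel\"of). Therefore $\mathcal{W}$ has measure zero. Since the initialization $\hat{\mathbf{x}}^{0}$ follows a nonatomic measure and $\hat{\mathbf{y}}^{0}=\bm 0$ is deterministic, and a nonatomic measure on $(\mathbb{R}^{n})^{m}$ assigns zero mass to any measure-zero set (here we need absolute continuity, or more carefully: the relevant fact used in \cite{daneshmand2020second} is that a nonatomic measure may still charge a measure-zero set, so one actually argues on the slice — I would mirror their treatment by noting $\mathcal{W}\cap\{\hat{\mathbf{y}}=\bm 0\}$ projects to a measure-zero subset of $(\mathbb{R}^{n})^{m}$ and a nonatomic measure on $(\mathbb{R}^{n})^{m}$ assigns it zero probability). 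Hence $\mathbb{P}[\lim_k (\hat{\mathbf{x}}^{k},\hat{\mathbf{y}}^{k}) \in \mathcal{A}_{T}] = 0$, and since $\lim_k \hat{\mathbf{x}}^{k} \in \mathcal{X}_{cs}$ together with the fixed-point structure forces $\lim_k(\hat{\mathbf{x}}^{k},\hat{\mathbf{y}}^{k})\in\mathcal{X}_{cs}\times\mathcal{G}(\mathcal{X}_{cs})\subseteq\mathcal{A}_{T}$ (Theorem~\ref{the: EXTRA convergence} guarantees the limit, if it exists, is consensual first-order stationary, and the $\hat{\mathbf{y}}$-component is pinned to $\alpha\nabla F$ of the limit), the claim follows.

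The main obstacle I anticipate is the measure-theoretic bookkeeping around the initialization, not the dynamical-systems part. Specifically: the center-stable manifold theorem needs $T$ to be a $C^{1}$ local diffeomorphism on a neighborhood of each saddle (supplied by Lemma~\ref{lem: Det} plus smoothness of $\nabla F$, which requires $F\in C^{1}$ with locally Lipschitz gradient — Assumption~\ref{ass: Lipschitz} gives exactly this), and one must be careful that $\mathcal{A}_{T}$ could be a non-compact, even unbounded set, so the countable-cover step has to be justified via second countability of $(\mathbb{R}^{n})^{2m}$ rather than compactness. A second subtlety is that ``nonatomic'' is weaker than ``absolutely continuous,'' so strictly one should either strengthen the hypothesis or, as in \cite{daneshmand2020second}, argue that the stable set, being a countable union of $C^1$ submanifolds of positive codimension, is not merely measure-zero but is contained in a countable union of zero-sets each of which is charged with zero probability by any nonatomic measure — this last point deserves an explicit remark. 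Finally, one should double-check that Theorem~\ref{the: EXTRA convergence}'s conclusion (the averaged gradient and consensus error vanish in Ces\`aro sense) is enough to conclude that \emph{if} $\hat{\mathbf{x}}^{k}$ converges then its limit lies in $\mathcal{X}_{cs}$ when restricted to the saddle case — convergence of the full sequence is assumed in the event whose probability we bound, so this is consistent, but it is worth stating that the event $\{\lim_k \hat{\mathbf{x}}^k \in \mathcal{X}_{cs}\}$ is exactly $\{\lim_k(\hat{\mathbf{x}}^k,\hat{\mathbf{y}}^k)\in\mathcal{X}_{cs}\times\mathcal{G}(\mathcal{X}_{cs})\}$ up to a null set, which is where Theorem~\ref{the: EXTRA convergence} and the relation $\hat{\mathbf{y}}^{k}=\hat{\mathbf{z}}^{k}+\alpha\nabla F(\hat{\mathbf{x}}^{k})$ enter.
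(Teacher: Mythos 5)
Your proposal follows essentially the same route as the paper's proof: it combines Lemma~\ref{lem: Fixed point} (consensual strict saddles give unstable fixed points of $T$, i.e., $\mathcal{X}_{cs}\times\mathcal{G}(\mathcal{X}_{cs})\subseteq\mathcal{A}_T$) with Lemma~\ref{lem: Det} ($\det(\mathrm{D}T)\neq 0$ for $0<\alpha<\lambda_{\min}(\mathbf{V})/L_F$) and then runs the center-stable-manifold argument to show the stable set of $\mathcal{A}_T$ is null --- the only difference being that the paper outsources that last step entirely to Corollary~1 of \cite{lee2019first}, whereas you unpack it. The measure-theoretic caveats you flag (that ``nonatomic'' is strictly weaker than ``absolutely continuous,'' and that the initialization lives on the slice $\{\hat{\mathbf{y}}^{0}=\bm{0}\}$ so one must control the intersection of the stable set with that slice, not just its Lebesgue measure in $(\mathbb{R}^{n})^{2m}$) are genuine and are glossed over in the paper's own two-line proof as well, so your more careful treatment is, if anything, an improvement rather than a divergence.
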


\begin{proof}
    Lemma \ref{lem: Fixed point} shows that $\mathcal{X}_{cs} \times \mathcal{G}(\mathcal{X}_{cs}) \subseteq \mathcal{A}_{T}$, and Lemma \ref{lem: Det} shows that with $\alpha < \frac{\lambda_{\min}(\mathbf{V})}{L_{F}}$, $\det(\mathrm{D}T(\hat{\mathbf{x}},\alpha \nabla F(\hat{\mathbf{x}}))) \ne 0$. Applying Corollary 1 in \cite{lee2019first}  gives  that the stable set of the strict saddle points has zero measure, i.e., $\mu(\{(\hat{\mathbf{x}}^{0},\bm{0}):\lim_{k\rightarrow\infty} T^{k}(\hat{\mathbf{x}}^{0},\bm{0}) \in \mathcal{X}_{cs} \times \mathcal{G}(\mathcal{X}_{cs})\}) = 0$,
    % \begin{gather*}
    %     \mu(\{(\hat{\mathbf{x}}^{0},\bm{0}):\lim_{k\rightarrow\infty} T^{k}(\hat{\mathbf{x}}^{0},\bm{0}) \in \mathcal{X}_{cs} \times \mathcal{G}(\mathcal{X}_{cs})\}) = 0,
    % \end{gather*}
    where $\mu$ denotes the measure and $T^{k}$ is the $k$-fold composition of $T$. With any initialization $\hat{\mathbf{x}}^{0}$ drawn according to a nonatomic probability measure, we have
    \begin{gather*}
        \mathbb{P}\left[\lim_{k \rightarrow \infty} \hat{\mathbf{x}}^{k} \in \mathcal{X}_{cs}\right] = 0
    \end{gather*}
    as claimed.
\end{proof}

\begin{proofs}
    \textbf{Proof of Theorem \ref{the: EXTRA convergence 2}}: Let $\mathcal{X}_{c1}$ denote the set of consensual first-order stationary points of $F$ defined in Definition \ref{def: consensual first-order stationary point}. By Theorem \ref{the: EXTRA convergence}, for any fixed $0 < \alpha < \frac{1-(\lambda_{1}(\mathbf{P}))^{2}}{6(L_{F})^{4} + 6(L_{F})^{3} + 48 L_{F} + 1}$,
    % \begin{gather*}
    %     0 < \alpha < \frac{1-(\lambda_{1}(\mathbf{P}))^{2}}{6(L_{F})^{4} + 6(L_{F})^{3} + 48 L_{F} + 1},
    % \end{gather*}
    it holds that $\lim_{k\rightarrow\infty} \text{dist}(\hat{\mathbf{x}}^{k},\mathcal{X}_{c1}) = 0$. By Proposition \ref{pro: EXTRA non-convergence}, for any fixed $0 < \alpha < \frac{\lambda_{\min}(\mathbf{V})}{L_{F}}$, it holds that $\mathbb{P}\left[\lim_{k \rightarrow \infty} \hat{\mathbf{x}}^{k} \in \mathcal{X}_{cs}\right] = 0$. By the definition of  $\mathcal{X}_{cs}$ in Definition \ref{def: consensual strict saddle point}, combining the above results gives that for any fixed $0 < \alpha < \min\{\frac{1-(\lambda_{1}(\mathbf{P}))^{2}}{6(L_{F})^{4} + 6(L_{F})^{3} + 48 L_{F} + 1},~\frac{\lambda_{\min}(\mathbf{V})}{L_{F}}\}$,
    % \begin{gather*}
    %     0 < \alpha < \min\{\frac{1-(\lambda_{1}(\mathbf{P}))^{2}}{6(L_{F})^{4} + 6(L_{F})^{3} + 48 L_{F} + 1},~\frac{\lambda_{\min}(\mathbf{V})}{L_{F}}\},
    % \end{gather*}
    it holds that
    \begin{gather*}
        \mathbb{P}\left[\lim_{k\rightarrow\infty} \text{dist}(\hat{\mathbf{x}}^{k},\mathcal{X}_{c2}) = 0\right] = 1
    \end{gather*} 
    as claimed.
\end{proofs}

\section{Numerical Examples}
\label{sec: Numerical Examples}
\begin{figure}
    \centering
    \begin{subfigure}[h]{0.25\textwidth}
        \centering
        \includegraphics[width=\columnwidth]{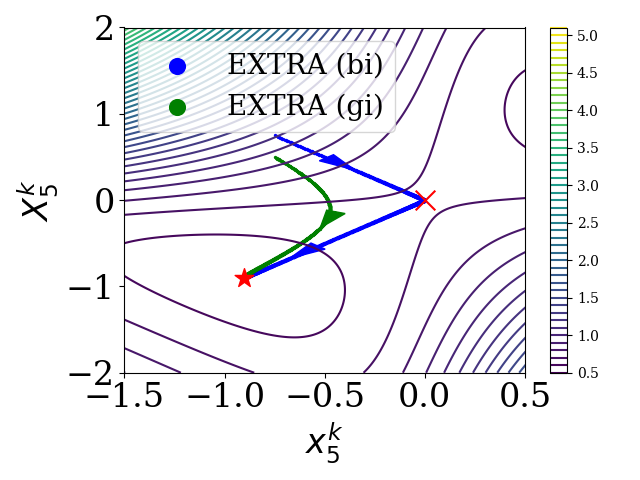}
        \caption{Trajectory of $[\mathbf{x}_{5}^{k},\mathbf{X}_{5}^{k}]$ at agent $5$, with $\times$ and $\star$ denoting the saddle point and the local minimizer.}
        \label{fig: Contour plot}
    \end{subfigure}
    \begin{subfigure}[h]{0.225\textwidth}
        \centering
        \includegraphics[width=\columnwidth]{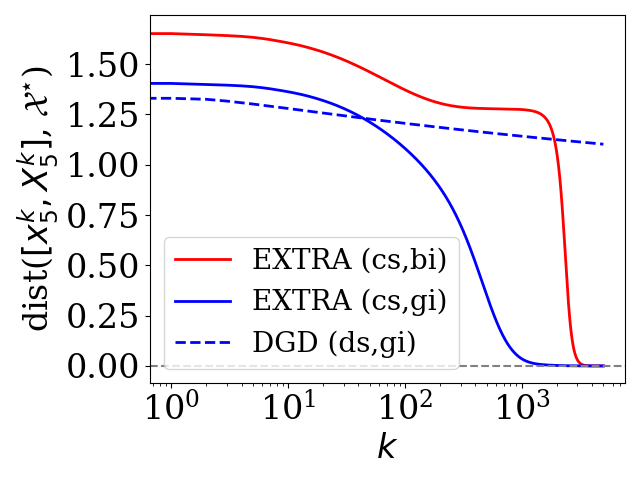}
        \caption{Evolution of distance between $[\mathbf{x}_{5}^{k},\mathbf{X}_{5}^{k}]$ and set $\mathcal{X}^{\star}$ at agent $5$.}
        \label{fig: Iteration plot}
    \end{subfigure}
    \centering
    \caption{\textbf{EXTRA} on a non-convex binary classification task. $ [\mathbf{x}_{5}^{k}, \mathbf{X}_{5}^{k}] $ denotes the $ k $-th iterate at agent $5$, and $\mathcal{X}^{\star}$ denotes the set of local minimizers (marked by a red $ \star $). ``bi'' and ``gi'' refer to bad and good initializations, respectively; ``cs'' and ``ds'' refer to constant and diminishing step-sizes respectively.}
    \label{fig: Binary classification example}
\end{figure}

We consider a bilinear logistic regression model, which can be viewed as a simplified one-hidden-layer neural network for binary classification, serving as an example of non-convex distributed optimization. Given a dataset $(\xi, \zeta)$  where  $\xi = \{\xi^{(1)}, \xi^{(2)}, \dots, \xi^{(m)}\}$  are the input features and  $\zeta = \{\zeta^{(1)}, \zeta^{(2)}, \dots, \zeta^{(m)}\}$  are the corresponding binary class labels, where $\xi^{(i)} \in \mathbb{R}^{n_{2}}$ and $\zeta^{(i)} \in \{-1,1\}$ for all $i = 1,\dots,m$. As an instance of Problem \eqref{eq: Unconstrained optimization problem}, the learning problem with regularization term can be formulated as
\begin{gather*}
    \min_{\mathbf{x},\mathbf{X}}~L(\mathbf{x},\mathbf{X}), \text{ where } L(\mathbf{x},\mathbf{X}) = \sum_{i=1}^{m} L_{i}(\mathbf{x},\mathbf{X}),
\end{gather*}
with weights $\mathbf{x} \in \mathbb{R}^{n_{1}}$, $\mathbf{X} \in \mathbb{R}^{n_{1}\times n_{2}}$ of appropriate dimensions, where $L_{i}(\mathbf{x},\mathbf{X}) = \frac{1}{m}  \ln(1 + \exp(-\zeta^{(i)}\mathbf{x}^{\top}\mathbf{X}\xi^{(i)})) + \frac{\eta}{2m}(\|\mathbf{x}\|^{2} + \|\mathbf{X}\|_{F}^{2})$.

For a straightforward visualization of the learning problem, we select parameters that render all variables as scalars, setting $n_{1} = n_{2} = 1$. Consider that we engage $m=20$  agents to collaboratively solve this problem, connecting them in a regular graph with maximum degree 19. Note that each agent $i$ only knows its local objective $L_{i}(\mathbf{x},\mathbf{X})$. Fig. \eqref{fig: Contour plot} shows the saddle point (marked by a red $\times$) located at $(0,0)$, with the two local minimizers (marked by a red $\star$) symmetrically positioned in the positive and negative quadrants. In the following instance, we set $\eta = 0.1$ and generate $\zeta^{(i)}$ uniformly from $\{-1,1\}$ and $\xi^{(i)} \sim \mathcal{N}(\zeta^{(i)},1)$. We generate mixing matrices satisfying Assumption \ref{ass: Mixing matrix} and set the constant step-size  $\alpha = 0.2$ by empirical tuning. As shown in Fig.~\eqref{fig: Contour plot}, the blue trajectory starts near the stable manifold of the saddle point at $(0,0)$ (marked by a red $\times$), causing the iterates to be temporarily attracted to the saddle before converging to a local minimizer (marked by a red $\star$). In contrast, the green trajectory benefits from a better initialization and escapes the saddle region more efficiently. Fig.~\eqref{fig: Iteration plot} further illustrates that, compared to \textbf{DGD} with diminishing step-size $\alpha^{k} = \frac{2}{k+1}$ \cite{zeng2018nonconvex}, \textbf{EXTRA} with a constant step-size escapes the saddle point more effectively and converges to a local minimizer at a faster rate.

\section{Conclusions}
\label{sec: Conclusions and Discussion}
This paper advances the theoretical understanding of \textbf{EXTRA} in non-convex distributed optimization by modeling it as a dynamical system. We show that \textbf{EXTRA} converges to a consensual first-order stationary point and almost surely avoids strict saddle points, offering second-order guarantees.

% \section*{Acknowledgment}

% \section*{References}

\bibliographystyle{unsrt}  
\bibliography{Reference} 

% \begin{IEEEbiography}{}
% Biography text here.
% \end{IEEEbiography}

% if you will not have a photo at all:
% \begin{IEEEbiographynophoto}{}
% Biography text here.
% \end{IEEEbiographynophoto}

% insert where needed to balance the two columns on the last page with
% biographies
%\newpage

% \begin{IEEEbiographynophoto}{}
% Biography text here.
% \end{IEEEbiographynophoto}

\end{document}